\newtheorem{theorem}{Theorem}[section]
\newtheorem{lemma}[theorem]{Lemma}
\newtheorem{corollary}[theorem]{Corollary}
\newtheorem{conjecture}[theorem]{Conjecture}
\theoremstyle{definition}
\newtheorem{definition}[theorem]{Definition}
\theoremstyle{remark}
\newcommand{\x}{\times}
\newcommand{\Hom}{\mathrm{Hom}}
\DeclareMathOperator*{\E}{\mathbb E}
\newcommand{\PP}{\mathbb{P}}
\newcommand{\RR}{\mathbb{R}}
\newcommand{\HH}{\mathbb{H}}
\newcommand{\ev}{\mathcal{E}}
\newcommand{\FF}{\mathcal{F}}
\renewcommand{\S}{\mathcal{S}}
\newcommand{\TT}{\mathcal{T}}
\newcommand{\X}{\mathbf{X}}
\newcommand{\Y}{\mathbf{Y}}
\tikzset{vtx/.style={inner sep=1.7pt, outer sep=0pt, circle, fill,draw}}
\title{On tripartite common graphs}
\author{
Andrzej Grzesik\thanks{Faculty of Mathematics and Computer Science, Jagiellonian University, {\L}ojasiewicza 6, 30-348 Krak\'{o}w, Poland, E-mail: {\tt Andrzej.Grzesik@uj.edu.pl}. Research supported in part by ERC Consolidator Grant LaDIST 648509.}\and
Joonkyung Lee\thanks{
Department of Mathematics, Hanyang University, 222 Wangsimni-ro, Seongdong-gu, Seoul, South Korea.
E-mail: {\tt
joonkyunglee@hanyang.ac.kr}. Research supported in part by ERC Consolidator Grant PEPCo 724903.}\and
Bernard Lidick\'{y}\thanks{Department of Mathematics, Iowa State University. Ames, IA, USA. E-mail: \texttt{lidicky@iastate.edu} Supported in part by NSF grant DMS-1855653.}\and
Jan Volec\thanks{
Department of Mathematics, Faculty of Nuclear Sciences and Physical Engineering, Czech Technical University in Prague, Trojanova 13, 120 00 Prague, Czech Republic.
E-mail: \texttt{jan@ucw.cz}
Previous affiliation: Department of Mathematics, Emory University, Atlanta, USA.
This project has received funding from the European Union’s Horizon 2020 research and innovation programme under the Marie Skłodowska-Curie grant agreement No. 800607.
}
}
\date{\today}
\begin{document}


\maketitle

\begin{abstract}
A graph $H$ is \emph{common} if the number of monochromatic copies of $H$ in a 2-edge-colouring of the complete graph $K_n$ is asymptotically minimised by the random colouring.
Burr and Rosta, extending a famous conjecture of Erd\H{o}s, conjectured that every graph is common. The conjectures of Erd\H{o}s and of Burr and Rosta were disproved by Thomason and by Sidorenko, respectively, in the late 1980s.
Collecting new examples of common graphs had not seen much progress since then,
although very recently a few more graphs were verified to be common by the flag algebra method or the recent progress on Sidorenko's conjecture.

Our contribution here is to provide several new classes of tripartite common graphs.
The first example is the class of so-called triangle-trees, which generalises two theorems by Sidorenko and answers a question of Jagger, \v{S}\v{t}ov\'{i}\v{c}ek, and Thomason from 1996.
We also prove that, somewhat surprisingly, given any tree $T$, there exists a triangle-tree such that the graph obtained by adding $T$ as a pendant tree is still common. 
Furthermore, we show that adding arbitrarily many apex vertices to any connected bipartite graph on at most $5$ vertices yields a common graph. 
\end{abstract}

\section{Introduction}
Ramsey's theorem states that for a fixed graph $H$, every 2-edge-colouring of $K_n$ contains a monochromatic copy of $H$ whenever $n$ is large enough. Perhaps one of the most natural questions extending Ramsey's theorem is how many monochromatic copies of $H$ can be guaranteed to exist. 
To formalise this question, let the \emph{Ramsey multiplicity} $M(H;n)$ be the minimum number of labelled monochromatic copies of $H$ over all 2-edge-colourings of $K_n$. We define the \emph{Ramsey multiplicity constant} $C(H)$ as
\begin{align*}
    C(H):=\lim_{n\rightarrow\infty}\frac{M(H,n)}{n(n-1)\cdots(n-v+1)} = \lim_{n\rightarrow\infty} {M(H,n)} \cdot n^{-v},
\end{align*}
where $v$ is the number of vertices in $H$. 
A random 2-edge-coloring of $K_n$ shows  $C(H)\leq 2^{1-e(H)}$.
We say a graph is \emph{common} if $C(H)=2^{1-e(H)}$. For example, Goodman's formula~\cite{G59} implies that a triangle is common, i.e., $C(K_3)=1/4$.

In 1962, Erd\H{o}s~\cite{E62common} conjectured that every complete graph $K_t$ is common. This was later generalised by Burr and Rosta~\cite{BR80}, who conjectured that in fact every graph $H$ is common. In the late 1980s, both conjectures were disproved.
Sidorenko~\cite{Sid89common} proved that a triangle plus a pendant edge is an uncommon graph, and Thomason~\cite{Thom89} proved that $K_t$ is uncommon for $t\geq 4$.

Since then more examples of uncommon graphs have been found. For instance, Jagger, \v{S}\v{t}ov\'{i}\v{c}ek and Thomason~\cite{JST96} proved that every graph containing $K_4$ as a subgraph is uncommon, and Fox~\cite{F08} proved that $C(H)$ can be exponentially smaller than the commonality bound $2^{1-e(H)}$.

\medskip

Despite many results on the topic, the full classification of common graphs is still a wide open problem. 
All known examples of bipartite common graphs connect with progress on Sidorenko's conjecture~\cite{Sid92}, since the conjecture implies that every bipartite graph is common. 
The converse is also an open question --- does every bipartite common graph satisfies Sidorenko's conjecture? 
Very recently it was shown~\cite{KNNVW20} that a bipartite graph satisfies Sidorenko's conjecture if and only if it is common in \emph{any} multi-colour sense. There has been some progress on Sidorenko's conjecture (see, for example,~\cite{CL20} and references therein) but the full conjecture remains open.

There are not many non-bipartite graphs known to be common.
For example, one of the earliest applications of the flag algebra method established that the 5-wheel is common~\cite{HHKNR12}. 
In case of tripartite graphs, a few more examples have been collected, e.g., odd cycles~\cite{Sid89common} and even wheels~\cite{JST96,Sid96}. 

Two examples of general classes of non-bipartite common graphs are triangle-vertex-trees and triangle-edge-trees, obtained by Sidorenko~\cite{Sid96} and reproved by Jagger, \v{S}\v{t}ov\'{i}\v{c}ek, and Thomason~\cite{JST96}. These can be described recursively. A single triangle is a \emph{triangle-tree} and one may obtain a triangle-tree by identifying a single vertex or an edge of a new triangle with a vertex or an edge, respectively, in a triangle-tree.
A triangle-tree is a \emph{triangle-vertex-tree} (resp. \emph{triangle-edge-tree}) if it is obtained by identifying only vertices (resp. edges). See Figure~\ref{fig:trees} for examples.

\newcommand{\maketriangle}[3]{
\draw (#1) coordinate (#31)  --
++(#2+0:1) coordinate (#32){} --
++(#2+120:1) coordinate (#33){} --cycle
(#31) node[vtx]{}
(#32) node[vtx]{}
(#33) node[vtx]{}
;
}
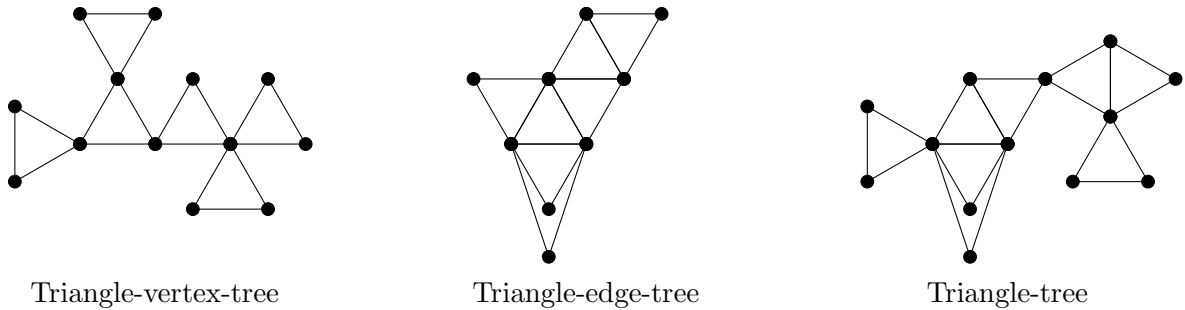
\begin{figure}
{\hskip 1em
    \begin{tikzpicture}
    \maketriangle{0,0}{0}{x}
    \maketriangle{x1}{150}{a}
    \maketriangle{x3}{60}{b}
    \maketriangle{x2}{0}{c}
    \maketriangle{c2}{240}{y}
    \maketriangle{c2}{0}{y}
    \draw
    (1,-2) node{Triangle-vertex-tree}
    ;
    \end{tikzpicture}
\hfill
    \begin{tikzpicture}
    \maketriangle{0,0}{0}{x}
    \maketriangle{x1}{60}{a}
    \maketriangle{x2}{60}{b}
    \maketriangle{b3}{0}{c}
    \maketriangle{c2}{60}{z}
    \maketriangle{x2}{180}{f}
    \draw (0.5,-1.5) node[vtx](x){} (x1)--(x)--(x2)
    ;
    \draw
    (1,-2) node{Triangle-edge-tree}
    ;
    \end{tikzpicture}
\hfill
    \begin{tikzpicture}
    \maketriangle{0,0}{0}{x}
    \maketriangle{x1}{150}{a}
    \maketriangle{x2}{60}{b}
    \maketriangle{b2}{-30}{c}
    \maketriangle{c2}{30}{z}
    \maketriangle{c2}{240}{z}
    \maketriangle{x2}{180}{f}
    \draw (0.5,-1.5) node[vtx](x){} (x1)--(x)--(x2)
    ;
    \draw
    (1,-2) node{Triangle-tree}
    ;
    \end{tikzpicture}    
\hskip 1em}
    \caption{Examples of a triangle-vertex-tree, triangle-edge-tree, and triangle-tree.}
    \label{fig:trees}
\end{figure}

Jagger, \v{S}\v{t}ov\'{i}\v{c}ek and Thomason~\cite{JST96} asked whether tree-like structures other than triangle-vertex (or triangle-edge) trees formed from triangles are common. In particular, they asked if the triangle-tree formed by three triangles, as described in Figure~\ref{fig:jjt}, is common.
We ultimately answer these questions.

\begin{figure}[ht]
\centering
    \begin{tikzpicture}
    \maketriangle{0,0}{-30}{x}
    \maketriangle{x1}{150}{a}
    \maketriangle{x2}{30}{b}
    ;
    \end{tikzpicture}    
    \caption{A triangle-tree suggested by Jagger, \v{S}\v{t}ov\'{i}\v{c}ek and Thomason~\cite{JST96}.}
    \label{fig:jjt}
\end{figure}
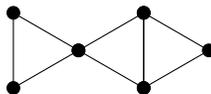

\begin{theorem}\label{thm:tritree}
Every triangle-tree is common.
\end{theorem}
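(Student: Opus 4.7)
It is standard that $H$ is common iff, for every symmetric measurable $u\colon[0,1]^2\to[-1,1]$,
\begin{equation*}
\Psi(H,u) \;:=\; \sum_{\substack{F\subseteq E(H)\\ |F|\ \text{even}}} t(H_F, u) \;\geq\; 1,
\end{equation*}
where $H_F$ denotes the spanning subgraph of $H$ with edge set $F$ and $t(H_F,u)$ is the homomorphism density of $H_F$ when the signed kernel $u$ is placed on each edge; this follows by writing any graphon as $W=(1+u)/2$ and expanding $t(H,W)+t(H,1-W)$. I would proceed by induction on the number of triangles in the triangle-tree $H$. The base case $H=K_3$ reduces to
\begin{equation*}
\Psi(K_3,u) \;=\; 1 + 3\int\bigl(\textstyle\int u(x,y)\,dx\bigr)^{\!2}\,dy \;\geq\; 1,
\end{equation*}
which is immediate from the symmetry of $u$.

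For the inductive step, write $H' = H \cup_X \Delta$, where $\Delta$ is a triangle glued to $H$ along either a single vertex $v$ or an edge $vw$. To treat both cases uniformly I introduce \emph{rooted} partial sums. For vertex-gluing, put
\begin{equation*}
A_H(v) \;:=\; \sum_{\substack{F\subseteq E(H)\\ |F|\ \text{even}}} f_F(v), \qquad B_H(v) \;:=\; \sum_{\substack{F\subseteq E(H)\\ |F|\ \text{odd}}} f_F(v),
\end{equation*}
where $f_F(v)=\int\prod_{xy\in F} u(x,y)\, d\mu^{V(H)\setminus\{v\}}$, and similarly $A_\Delta(v),B_\Delta(v)$ for $\Delta$. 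A direct parity expansion gives $\Psi(H',u) = \int\bigl(A_H\cdot A_\Delta + B_H\cdot B_\Delta\bigr)\,dv$, and an analogous bilinear form in a two-variable rooted function for edge-gluing. The rooted triangle contributions $A_\Delta,B_\Delta$ admit explicit closed forms in $u$, its marginal $u_1(v)=\int u(v,y)\,dy$, and the positive semidefinite kernel $(u\star u)(v,w)=\int u(v,y)u(y,w)\,dy$.

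The main obstacle is that the inductive hypothesis $\Psi(H,u)\geq 1$ alone gives no pointwise control on the rooted functions $A_H, B_H$, so a strengthening is required. The plan is to establish, for every rooted triangle-tree, a Cauchy--Schwarz-type estimate on the pair $(A_H, B_H)$ which is \emph{preserved} under attaching a new triangle along a vertex or along an edge, so that $\int(A_H A_\Delta + B_H B_\Delta)\,dv \;\geq\; \int A_H\,dv \;\geq\; 1$ holds in the inductive step. The strengthened bound must leverage the positive semidefiniteness of $u\star u$ together with the explicit algebraic identities for $A_\Delta,B_\Delta$ coming from the triangle. Identifying the correct inductive invariant and verifying its stability under \emph{both} gluing operations—thereby simultaneously generalising Sidorenko's theorems on triangle-vertex-trees and triangle-edge-trees—is the heart of the argument and the step where the bulk of the work lies.
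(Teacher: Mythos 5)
Your plan correctly sets up the equivalence $m_H(W)\geq 2^{1-e(H)}$ if and only if $\Psi(H,u)\geq 1$ (the paper's identity~\eqref{eq:even}), the base case for $K_3$ is right, and the bilinear decomposition $\Psi(H',u)=\int(A_H A_\Delta+B_H B_\Delta)\,dv$ for vertex-gluing is a valid parity expansion. But the proposal is not a proof: you stop exactly at the step you yourself flag as ``the heart of the argument,'' namely exhibiting a rooted inductive invariant on $(A_H,B_H)$ (and its two-variable analogue for edge-gluing) that is simultaneously strong enough to force $\int(A_H A_\Delta+B_H B_\Delta)\,dv\geq 1$ and stable under \emph{both} gluing operations. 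No candidate invariant is proposed, and it is not clear one of the Cauchy--Schwarz type you describe exists: for instance $A_H(v)\geq|B_H(v)|$ pointwise already fails for $K_3$ rooted at a vertex under suitable $u$, and the odd sum $B_\Delta(v)$ contains the global term $t_{K_2}(u)$, which can have either sign and does not decouple from $v$. So as written there is a genuine gap where the difficulty of generalising from triangle-vertex-trees and triangle-edge-trees to arbitrary triangle-trees actually sits.

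The paper sidesteps this recursion entirely. It works with $W$ and $1-W$ rather than the signed kernel $u$, encodes a triangle-tree by a triangle-decomposition $(\FF,\TT)$, and invokes a pre-established entropy/tensor-power inequality (Lemma~\ref{lem:tree_hom}, from~\cite{L19}) to get the closed-form bound $t_H(W)\geq t_{K_3}(W)^{\varphi(H)}/t_{K_2}(W)^{\kappa(H)}$ with $\varphi(H)=e(H)-v(H)+1$ and $\kappa(H)=2e(H)-3v(H)+3$. This single inequality, applied to $W$ and $1-W$ and combined via the H\"older-type Lemma~\ref{lem:tm}, yields $m_H(W)\geq 2^{\kappa(H)+1-\varphi(H)}\,m_{K_3}(W)^{\varphi(H)}$, and then $m_{K_3}(W)\geq 1/4$ finishes. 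In effect, the ``inductive invariant'' you are searching for has been packaged once and for all as a homomorphism-density inequality proved by entropy, and the parity bookkeeping over $\ev(H)$ never appears. If you want to pursue the signed-kernel induction, the place to look is whether the rooted functions satisfy a positivity inherited from $t_{K_{1,2}}(u)\geq 0$ and $t_{C_4}(u)\geq 0$, but establishing an invariant closed under both vertex- and edge-gluing is precisely what Sidorenko's two separate theorems (for triangle-vertex-trees and triangle-edge-trees) failed to unify, so that step should not be treated as routine.
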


Non-bipartite graphs are more likely to be uncommon than bipartite graphs in many ways. 
Firstly, no examples of bipartite uncommon graphs, which would disprove Sidorenko's conjecture, are known. 
Additionally, Fox~\cite[Lemma~2.1]{F08} observed that any graph with chromatic number at least four and small enough average degree is always uncommon. 
And most importantly, there is a well-known strategy \cite[Theorem~4]{JST96} to produce non-bipartite uncommon graphs. That is, by adding a (possibly large) pendant tree, e.g., a long path, to a non-bipartite graph.
Sidorenko's counterexample, the triangle plus a pendant edge, for the Burr--Rosta conjecture can be seen as one of the earliest examples of this kind.

Our second result states that for some tripartite graphs this strategy of adding a pendant tree fails when adding a small pendant tree.
In other words, there are tripartite graphs that are `robustly common' in the sense that adding any tree of bounded size does not break their commonality.
For a tree~$T$ and a graph $H$, let $T*_{u}^{v}H$ be the graph obtained by identifying $u\in V(T)$ and $v\in V(H)$.

\begin{theorem}\label{thm:adding_tree}
Let $t$ be a positive integer.
If $H$ is a triangle-tree with $2e(H)-3v(H)+3 \geq t$ then $T*_{u}^{v}H$ is common for every choice of tree $T$ with $e(T)\leq t$, $u\in V(T)$, and $v\in V(H)$.
\end{theorem}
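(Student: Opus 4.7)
The plan is to work in the graphon framework and choose $H$ to be a sufficiently large triangle-tree whose structure provides enough slack in the commonality inequality to absorb any bounded pendant tree. For a graphon $W:[0,1]^2 \to [0,1]$ and a rooted graph $(G, w)$, write $h_{G,w}^R(x)$ (resp.\ $h_{G,w}^B(x)$) for the rooted homomorphism density at $x$ in $W$ (resp.\ $1-W$), that is, the integral of edge weights over maps $G \to [0,1]^{V(G) \setminus \{w\}}$ with $w \mapsto x$. The commonality of $T *_u^v H$ is equivalent to
\[
I(W) := \int_0^1 \bigl[\, h_{H,v}^R(x)\, h_{T,u}^R(x) + h_{H,v}^B(x)\, h_{T,u}^B(x)\, \bigr]\, dx \geq 2^{1-e(H)-e(T)}
\]
for every graphon $W$.

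Given $t$, I would take $H = H_k$ to be a triangle-vertex-tree in which every vertex is the shared vertex of at least $k$ triangles (a ``triangle bush''), for a sufficiently large $k = k(t)$. Such $H_k$ is common by Theorem~\ref{thm:tritree} and is tripartite. The crucial structural feature is that for any choice of root $v$, the rooted density $h_{H_k,v}^R(x)$ factors into a product over the $k$ triangles at $v$ (and further triangle-subtree factors), so the integrand inherits a multiplicative structure that we can exploit.

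The central step is a refined ``rooted commonality'' lemma strengthening Theorem~\ref{thm:tritree}: for any pair of non-negative functions $(A^R, A^B)$ arising as $(h_{T,u}^R, h_{T,u}^B)$ for some tree $T$ with $e(T) \leq t$, one has $\int (h_{H_k,v}^R\, A^R + h_{H_k,v}^B\, A^B)\, dx \geq 2^{1-e(H_k)-e(T)}$. To prove it, I would use the identity $aA + bB = \tfrac12 (a+b)(A+B) + \tfrac12 (a-b)(A-B)$ pointwise in $x$, splitting $2I(W)$ into a symmetric part and a correlation part. The symmetric part $\int (h_H^R + h_H^B)(h_T^R + h_T^B)\, dx$ is bounded below by combining commonality of $H_k$ with the Sidorenko bound $\int (h_{T,u}^R + h_{T,u}^B)\, dx \geq 2^{1-e(T)}$ for trees, using the concentration of the $k$-fold triangle product around its mean. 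The correlation part $\int (h_H^R - h_H^B)(h_T^R - h_T^B)\, dx$ is handled by a monotone-coupling argument: both factors track the local red/blue imbalance at $x$ in the same direction, so their integrated product is non-negative.

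The main obstacle is making the concentration and correlation quantitative and uniform over all graphons $W$, particularly at extreme configurations where individual rooted triangle factors may vanish or saturate. The resolution is to carefully track the Jensen slacks in the proof of Theorem~\ref{thm:tritree}: each triangle at $v$ contributes a positive slack that accumulates multiplicatively in $k$, whereas Sidorenko's inequality for $T$ introduces a slack bounded in $e(T) \leq t$. Choosing $k = k(t)$ large enough then guarantees the triangle slack dominates, which closes the argument.
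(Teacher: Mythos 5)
Your choice of $H$ is the critical flaw. You propose a \emph{triangle-vertex-tree} (a ``triangle bush'' where triangles share only vertices). For any triangle-vertex-tree, every bag intersection $H[X\cap Y]$ in the triangle-decomposition is a single vertex, so $\kappa(H)=0$. In that regime the commonality inequality $m_H(W)\ge 2^{1-e(H)}$ is \emph{tight} for every $1/2$-regular graphon $W$, not just for $W\equiv 1/2$ (this is noted explicitly in the paper's concluding remarks). Consequently there is no ``accumulating Jensen slack'' to exploit: for a $1/2$-regular but non-quasirandom $W$, each of your $k$ triangle factors at the root is exactly at equality, and taking $k$ large produces $k$ copies of zero slack, not a growing buffer. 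The simplest instance of this failure is $k=1$: a single triangle is a triangle-vertex-tree, and attaching a pendant edge gives $K_3^+$, Sidorenko's original uncommon graph. Your argument offers no mechanism that would make the situation any better for larger triangle bushes.

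What actually makes the theorem work is a different structural parameter. The paper takes $H$ to be a triangle-tree with $\kappa(H)=2e(H)-3v(H)+3\geq t$, that is, a triangle-tree in which at least $t$ of the gluings are along \emph{edges}, not vertices (e.g.\ a long enough triangle-edge-tree). The key lemma is an entropy estimate: building a joint tree decomposition of $T*_u^v H$ and applying the Markov-tree entropy theorem yields
\begin{align*}
    t_{T*_{u}^{v}H}(W)\;\geq\;\frac{t_{K_3}(W)^{\varphi(H)}}{t_{K_2}(W)^{\kappa(H)-e(T)}},
\end{align*}
provided $e(T)\le\kappa(H)$. Intuitively, each edge of $T$ ``uses up'' one $t_{K_2}$ factor in the denominator, and there are enough to spare precisely when $\kappa(H)\ge e(T)$. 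This is then fed into the H\"older-type inequality (Lemma~\ref{lem:tm}) together with $m_{K_3}(W)\ge 1/4$ to give exactly the needed bound $2^{1-e(H)-e(T)}$. Your symmetric/correlation decomposition and the claimed non-negativity of the correlation term $\int (h_H^R - h_H^B)(h_T^R - h_T^B)\,dx$ are additionally unsupported --- there is no reason both factors should have the same sign pointwise for a general graphon --- but even if that were repaired, the wrong choice of $H$ would remain fatal.
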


Julia Wolf, during her plenary talk at the Canadian Discrete and Algorithmic Mathematics Conference in 2017 on the results from~\cite{SW17}, prompted to complete the list of connected common graphs on five vertices; 
Figure~\ref{fig:smallcommon} depicts the four graphs that had an unknown status at the time of her talk. 

\begin{figure}[ht]
{\hfill
    \begin{tikzpicture}[scale=1.0]
    \draw 
    (0,0) node[vtx](x){}
    (1,0) node[vtx](y){}
    (60:1) node[vtx](z){}
    (y)--++(60:1) node[vtx](w){}--(z)
    (x)--(y)--(z)--(x)
    (2,-0) node[vtx](xx){}
    (w)--(xx);
    \draw(1,-0.5) node{$H_1$};
    \end{tikzpicture}
\hfill
    \begin{tikzpicture}[scale=1.0]
    \draw 
    (0,0) node[vtx](x){}
    (1,0) node[vtx](y){}
    (60:1) node[vtx](z){}
    (y)--++(60:1) node[vtx](w){}--(z)
    (x)--(y)--(z)--(x)
    (2,0) node[vtx](xx){}
    (y)--(xx);
    \draw(1,-0.5) node{$H_2$};
    \end{tikzpicture}
\hfill
    \begin{tikzpicture}[scale=0.7]
    \draw 
    \foreach \i in  {0,1,2,3,4}{
    (90+72*\i:1) node[vtx](v\i){}
    }
    (v1)--(v2)--(v3)--(v4)--(v0)--(v1)--(v4);
    \draw(0,-1.5) node{$H_3$};
    \end{tikzpicture}
\hfill
    \begin{tikzpicture}[scale=0.8]
    \draw 
    \foreach \i in  {1,2,3}{(90+120*\i:1) node[vtx](v\i){}}
    (0,0) node[vtx](v0){}
    (v0)--(v3) node[midway,vtx]{}
    (v1)--(v2)--(v3)--(v1)--(v0)--(v2);
    \draw(0,-1) node{$H_4$};
    \end{tikzpicture}
\hfill}
\caption{Wolf's list of $5$-vertex connected graphs that were not known to be (un)common in 2017.}
\label{fig:smallcommon}
\end{figure}
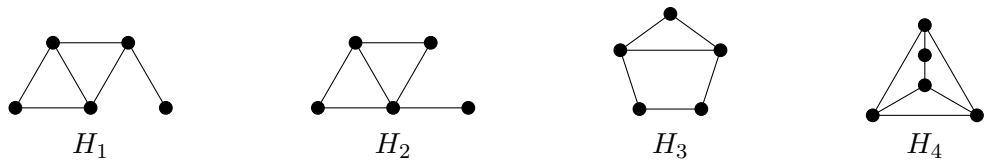

Theorem~\ref{thm:adding_tree} proves that $H_1$ and $H_2$ are common.
The graph $H_3$ was proven to be common in an RSI project at MIT~\cite{RagThesis} using flag algebras.
Another flag algebra application shows that $H_4$ is common; in the Appendix, we give a proof that both $H_3$ and $H_4$ are common.
This completely resolves Wolf's question. 

Analogous applications of flag algebras allow us to show that also various $4$-chromatic graphs are common.
Specifically, we prove that the $7$-wheel as well as all the connected $7$-vertex $K_4$-free non-$3$-colourable graphs are common; see Figure~\ref{fig:4colcommon} for their complete list.
Note that the only previously known examples of non-$3$-colorable graphs were the $5$-wheel or graphs constructed from gluing copies of the $5$-wheel.
We suspect that all the odd wheels except $K_4$ are common, although the plain flag algebra approach for the $9$-wheel is already beyond our computational capacity.

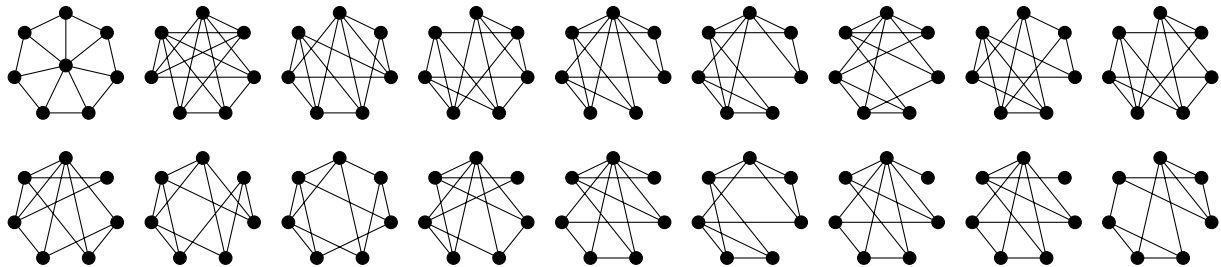
\begin{figure}[ht]\def\FOURcolcommscale{0.7}
{\hfill
    \begin{tikzpicture}[scale=\FOURcolcommscale]
    \draw     (0,0) node[vtx](x){}
    \foreach \i in  {0,1,2,3,4,5,6}{
    (x)--(90+51.428*\i:1) node[vtx](v\i){}
    }
    (v1)--(v2)--(v3)--(v4)--(v5)--(v6)--(v0)--(v1);
    \end{tikzpicture}
\hfill
\begin{tikzpicture}[scale=\FOURcolcommscale]
\draw\foreach \i in {0,1,2,3,4,5,6}{(90+51.4286*\i:1) node[vtx](v\i){}};
\draw(v0)--(v2)(v1)--(v2)(v0)--(v3)(v1)--(v3)(v0)--(v4)(v1)--(v4)(v3)--(v4)(v0)--(v5)(v1)--(v5)(v2)--(v5)(v4)--(v5)(v0)--(v6)(v1)--(v6)(v2)--(v6); \draw (v3)--(v6);
\end{tikzpicture}
\hfill
\begin{tikzpicture}[scale=\FOURcolcommscale]
\draw\foreach \i in {0,1,2,3,4,5,6}{(90+51.4286*\i:1) node[vtx](v\i){}};
\draw(v0)--(v2)(v1)--(v2)(v0)--(v3)(v1)--(v3)(v2)--(v3)(v0)--(v4)(v1)--(v4)(v3)--(v4)(v0)--(v5)(v1)--(v5)(v2)--(v5)(v0)--(v6)(v4)--(v6)(v5)--(v6);
\end{tikzpicture}
\hfill
\begin{tikzpicture}[scale=\FOURcolcommscale]
\draw\foreach \i in {0,1,2,3,4,5,6}{(90+51.4286*\i:1) node[vtx](v\i){}};
\draw(v1)--(v2)(v0)--(v3)(v1)--(v3)(v2)--(v3)(v0)--(v4)(v1)--(v4)(v2)--(v4)(v0)--(v5)(v2)--(v5)(v4)--(v5)(v0)--(v6)(v1)--(v6)(v3)--(v6)(v5)--(v6);
\end{tikzpicture}
\hfill
\begin{tikzpicture}[scale=\FOURcolcommscale]
\draw\foreach \i in {0,1,2,3,4,5,6}{(90+51.4286*\i:1) node[vtx](v\i){}};
\draw(v0)--(v1)(v0)--(v2)(v0)--(v3)(v1)--(v3)(v2)--(v3)(v0)--(v4)(v1)--(v4)(v2)--(v4)(v0)--(v5)(v2)--(v5)(v0)--(v6)(v1)--(v6)(v5)--(v6);
\end{tikzpicture}
\hfill
\begin{tikzpicture}[scale=\FOURcolcommscale]
\draw\foreach \i in {0,1,2,3,4,5,6}{(90+51.4286*\i:1) node[vtx](v\i){}};
\draw(v0)--(v1)(v0)--(v2)(v0)--(v3)(v1)--(v3)(v2)--(v3)(v1)--(v4)(v2)--(v4)(v3)--(v4)(v0)--(v5)(v2)--(v5)(v0)--(v6)(v1)--(v6)(v5)--(v6);
\end{tikzpicture}
\hfill
\begin{tikzpicture}[scale=\FOURcolcommscale]
\draw\foreach \i in {0,1,2,3,4,5,6}{(90+51.4286*\i:1) node[vtx](v\i){}};
\draw(v0)--(v1)(v0)--(v2)(v0)--(v3)(v2)--(v3)(v1)--(v4)(v2)--(v4)(v3)--(v4)(v0)--(v5)(v1)--(v5)(v3)--(v5)(v0)--(v6)(v1)--(v6)(v2)--(v6);
\end{tikzpicture}
\hfill
\begin{tikzpicture}[scale=\FOURcolcommscale]
\draw\foreach \i in {0,1,2,3,4,5,6}{(90+51.4286*\i:1) node[vtx](v\i){}};
\draw(v0)--(v2)(v1)--(v2)(v0)--(v3)(v1)--(v3)(v0)--(v4)(v1)--(v4)(v2)--(v4)(v3)--(v4)(v1)--(v5)(v2)--(v5)(v0)--(v6)(v3)--(v6)(v5)--(v6);
\end{tikzpicture}
\hfill
\begin{tikzpicture}[scale=\FOURcolcommscale]
\draw\foreach \i in {0,1,2,3,4,5,6}{(90+51.4286*\i:1) node[vtx](v\i){}};
\draw(v1)--(v2)(v0)--(v3)(v1)--(v3)(v2)--(v3)(v0)--(v4)(v1)--(v4)(v2)--(v4)(v0)--(v5)(v2)--(v5)(v4)--(v5)(v0)--(v6)(v1)--(v6)(v3)--(v6);
\end{tikzpicture}
\hfill}
\vskip 1em
{\hfill
\begin{tikzpicture}[scale=\FOURcolcommscale]
\draw\foreach \i in {0,1,2,3,4,5,6}{(90+51.4286*\i:1) node[vtx](v\i){}};
\draw(v0)--(v1)(v0)--(v2)(v1)--(v2)(v0)--(v3)(v2)--(v3)(v0)--(v4)(v1)--(v4)(v0)--(v5)(v3)--(v5)(v4)--(v5)(v1)--(v6)(v2)--(v6);
\end{tikzpicture}
\hfill
\begin{tikzpicture}[scale=\FOURcolcommscale]
\draw\foreach \i in {0,1,2,3,4,5,6}{(90+51.4286*\i:1) node[vtx](v\i){}};
\draw(v0)--(v1)(v0)--(v2)(v1)--(v2)(v1)--(v3)(v2)--(v3)(v0)--(v4)(v2)--(v4)(v0)--(v5)(v1)--(v5)(v3)--(v6)(v4)--(v6)(v5)--(v6);
\end{tikzpicture}
\hfill
\begin{tikzpicture}[scale=\FOURcolcommscale]
\draw\foreach \i in {0,1,2,3,4,5,6}{(90+51.4286*\i:1) node[vtx](v\i){}};
\draw(v0)--(v1)(v0)--(v2)(v1)--(v2)(v1)--(v3)(v2)--(v3)(v0)--(v4)(v2)--(v4)(v1)--(v5)(v3)--(v5)(v0)--(v6)(v4)--(v6)(v5)--(v6);
\end{tikzpicture}
\hfill
\begin{tikzpicture}[scale=\FOURcolcommscale]
\draw\foreach \i in {0,1,2,3,4,5,6}{(90+51.4286*\i:1) node[vtx](v\i){}};
\draw(v0)--(v1)(v0)--(v2)(v0)--(v3)(v1)--(v3)(v2)--(v3)(v0)--(v4)(v2)--(v4)(v0)--(v5)(v1)--(v5)(v4)--(v5)(v1)--(v6)(v2)--(v6);
\end{tikzpicture}
\hfill
\begin{tikzpicture}[scale=\FOURcolcommscale]
\draw\foreach \i in {0,1,2,3,4,5,6}{(90+51.4286*\i:1) node[vtx](v\i){}};
\draw(v0)--(v1)(v0)--(v2)(v0)--(v3)(v2)--(v3)(v0)--(v4)(v1)--(v4)(v3)--(v4)(v0)--(v5)(v1)--(v5)(v2)--(v5)(v0)--(v6)(v1)--(v6);
\end{tikzpicture}
\hfill
\begin{tikzpicture}[scale=\FOURcolcommscale]
\draw\foreach \i in {0,1,2,3,4,5,6}{(90+51.4286*\i:1) node[vtx](v\i){}};
\draw(v0)--(v1)(v0)--(v2)(v1)--(v3)(v2)--(v3)(v1)--(v4)(v2)--(v4)(v3)--(v4)(v0)--(v5)(v2)--(v5)(v0)--(v6)(v1)--(v6)(v5)--(v6);
\end{tikzpicture}
\hfill
\begin{tikzpicture}[scale=\FOURcolcommscale]
\draw\foreach \i in {0,1,2,3,4,5,6}{(90+51.4286*\i:1) node[vtx](v\i){}};
\draw(v0)--(v1)(v0)--(v2)(v0)--(v3)(v2)--(v3)(v0)--(v4)(v1)--(v4)(v3)--(v4)(v0)--(v5)(v1)--(v5)(v2)--(v5)(v0)--(v6);
\end{tikzpicture}
\hfill
\begin{tikzpicture}[scale=\FOURcolcommscale]
\draw\foreach \i in {0,1,2,3,4,5,6}{(90+51.4286*\i:1) node[vtx](v\i){}};
\draw(v0)--(v1)(v0)--(v2)(v0)--(v3)(v2)--(v3)(v0)--(v4)(v1)--(v4)(v3)--(v4)(v0)--(v5)(v1)--(v5)(v2)--(v5)(v1)--(v6);
\end{tikzpicture}
\hfill
\begin{tikzpicture}[scale=\FOURcolcommscale]
\draw\foreach \i in {0,1,2,3,4,5,6}{(90+51.4286*\i:1) node[vtx](v\i){}};
\draw(v1)--(v2)(v0)--(v3)(v2)--(v3)(v0)--(v4)(v2)--(v4)(v3)--(v4)(v0)--(v5)(v1)--(v5)(v0)--(v6)(v1)--(v6)(v5)--(v6);
\end{tikzpicture}
\hfill}
\caption{The $7$-wheel and all connected non-$3$-colourable common graphs on 7 vertices.}\label{fig:4colcommon}
\end{figure}

\medskip

Another interesting class of tripartite common graphs was obtained by Sidorenko~\cite{Sid96}.
If a connected bipartite graph $H$ satisfies Sidorenko's conjecture, then adding an \emph{apex} vertex~$v$, i.e., adding all the edges between the new vertex~$v$ and each vertex of $H$, gives a tripartite common graph. 
We conjecture that adding more apex vertices still produces common graphs. For a graph $H$ and a positive integer $a$, let $H^{+a}$ be the graph obtained from $H$ by adding $a$ additional vertices, each new vertex  fully connected to $H$ and not connected to any other new vertex. 
\begin{conjecture}\label{conj:Krst}
If a connected bipartite graph $H$ satisfies Sidorenko's conjecture, then for every positive integer $a$ the graph $H^{+a}$ is common.
In particular, every complete tripartite graph $K_{r,s,t}$ is common.
\end{conjecture}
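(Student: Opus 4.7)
The plan is to proceed by a Jensen reduction to the case $a = 1$, followed by a weighted Sidorenko argument on the resulting two-variable inequality. Writing
$$t(H^{+a}, W) = \int F_W(\mathbf{y})\, g_W(\mathbf{y})^a \, d\mathbf{y}$$
with $F_W(\mathbf{y}) = \prod_{ij \in E(H)} W(y_i, y_j)$ and $g_W(\mathbf{y}) = \int \prod_{i=1}^{v(H)} W(z, y_i) \, dz$ the common-neighborhood density of the tuple $\mathbf{y}$, Jensen's inequality applied to the convex function $x \mapsto x^a$ against the probability measure proportional to $F_W$ yields
$$t(H^{+a}, W) \;\geq\; \frac{t(H^{+1}, W)^{a}}{t(H, W)^{a-1}},$$
and analogously for $1-W$. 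This bound is tight at $W \equiv 1/2$, so summing over the two colours reduces the conjecture to
$$\frac{t(H^{+1}, W)^{a}}{t(H, W)^{a-1}} + \frac{t(H^{+1}, 1-W)^{a}}{t(H, 1-W)^{a-1}} \;\geq\; 2^{1 - e(H) - a \cdot v(H)},$$
to be closed using Sidorenko's conjecture for $H$ together with the commonality of $H^{+1}$ (the $a = 1$ case, which is Sidorenko's theorem).

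The most appealing way to close this two-variable inequality would be a pointwise apex-lifting bound $t(H^{+1}, W) \geq t(H, W) \cdot t(K_2, W)^{v(H)}$; combined with Sidorenko for $H$, convexity of $x \mapsto x^{e(H) + a v(H)}$, and $t(K_2, W) + t(K_2, 1-W) = 1$, this would immediately conclude the argument. Unfortunately, the pointwise lift fails in general: for $H = P_3$ and $W$ the bipartite graphon of $K_{n/2, n/2}$, the graph $H^{+1}$ contains a triangle so $t(H^{+1}, W) = 0$, while $t(P_3, W) \cdot t(K_2, W)^3 = 1/32 > 0$. Any successful argument therefore must handle the red and blue contributions simultaneously rather than separately.

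A more robust plan is to apply the weighted form of Sidorenko's conjecture directly to the inner $\mathbf{y}$-integral for each fixed apex tuple $\mathbf{z} = (z_1, \ldots, z_a)$, treating the vertex weights $\phi_\mathbf{z}(y) := \prod_{k=1}^{a} W(z_k, y)$ as a measure on $[0,1]$. Since $H$ is bipartite and satisfies Sidorenko, this yields
$$t(H^{+a}, W) \;\geq\; \int Q_\mathbf{z}^{\, e(H)} \, S_\mathbf{z}^{\, v(H) - 2 e(H)} \, d\mathbf{z},$$
where $S_\mathbf{z} := \int \phi_\mathbf{z}(y)\, dy$ and $Q_\mathbf{z} := \int \phi_\mathbf{z}(y_1) \phi_\mathbf{z}(y_2) W(y_1, y_2)\, dy_1 dy_2$ are star- and book-type densities, satisfying $\int S_\mathbf{z}\, d\mathbf{z} = t(K_{1,a}, W)$ and $\int Q_\mathbf{z}\, d\mathbf{z} = t(B_a, W)$. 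One then attempts to close the conjecture by combining this lower bound with its analogue for $1-W$ through a H\"older or power-mean argument on the integrals over $\mathbf{z}$.

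The main obstacle is that the exponent $v(H) - 2 e(H)$ can be positive or negative depending on whether $H$ is sparser or denser than a tree, which makes a uniform application of H\"older delicate and suggests that a complete proof may require either a sparse/dense dichotomy or a genuinely new red-blue mixing inequality exploiting $W$ and $1-W$ in tandem. As a first test case, we suggest attacking $H = K_{r,s}$, which would already resolve the longstanding problem of whether $K_{r,s,t}$ is common: the explicit spectral decomposition of complete bipartite graphons should collapse the book densities $Q_\mathbf{z}$ into computable expressions amenable to a direct Cauchy--Schwarz/H\"older analysis, possibly providing a template for the general case.
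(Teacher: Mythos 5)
The statement you were asked to prove is Conjecture~1.3, and the paper itself does \emph{not} prove it: the authors only verify it (Theorem~1.4) for connected bipartite $H$ on at most five vertices, via a computer-assisted flag-algebra computation. So the correct comparison is against the paper's partial result and the strategy it isolates. Your opening step is identical to theirs: $t_{H^{+a}}(W)\geq t_{H^{+1}}(W)^a/t_H(W)^{a-1}$ and the analogue for $1-W$, fed into the paper's H\"older lemma (Lemma~2.3) to get $m_{H^{+a}}(W)\geq m_{H^{+1}}(W)^a/m_H(W)^{a-1}$. Your observation that the pointwise apex lift $t_{H^{+1}}(W)\geq t_H(W)\,t_{K_2}(W)^{v(H)}$ fails is also correct (your $P_3$ counterexample is fine, as $P_3^{+1}$ is the diamond $K_{1,1,2}$, which has density zero in the balanced bipartite graphon).

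Where you and the paper part ways is in what to aim for after that reduction. The paper distils everything down to a single ``aggregate'' one-apex inequality,
\[
m_{H^{+1}}(W)\;\geq\;2^{-v(H)}\,m_H(W)\qquad\text{for every graphon }W,
\]
which is genuinely weaker than the pointwise lift because it lets the $W$ and $1-W$ contributions compensate one another. Once this is available, iterating gives $m_{H^{+a}}(W)\geq 2^{-a\,v(H)}m_H(W)\geq 2^{1-e(H)-a\,v(H)}=2^{1-e(H^{+a})}$, invoking Sidorenko for $H$ only in the final step; the paper explicitly remarks that proving this one inequality for all common $H$ would settle the conjecture. They establish it only for $v(H)\leq 5$ by flag algebras (Lemma~4.4). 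Your weighted-Sidorenko route with the apex tuple frozen is a legitimately different attack, and you honestly flag that it does not close; note, though, that the sign ambiguity of $v(H)-2e(H)$ that you cite as the obstruction is moot for \emph{connected} bipartite $H$ with $e(H)\geq 2$, where $v(H)-2e(H)\leq 1-e(H)<0$ always, so the real difficulty lies elsewhere. The substantive gap in your proposal is that it never reduces the conjecture to a clean, checkable intermediate target; identifying the displayed one-apex inequality as that target is precisely the paper's structural contribution to this problem.
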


We verify this conjecture for all connected bipartite graphs $H$ on at most 5 vertices, so, in particular, the complete tripartite graphs $K_{2,2,a}$ and $K_{2,3,a}$ are common for every $a\geq 1$.
\begin{theorem}\label{thm:Krst}
For every connected bipartite graph $H$ on at most $5$ vertices and positive integer $a$ the graph $H^{+a}$ is common.
\end{theorem}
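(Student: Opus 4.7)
The plan is to reduce commonness of $H^{+a}$ to a graphon inequality and then exploit the algebraic structure of adding apex vertices. Commonness of $H^{+a}$ is the statement
\[
t(H^{+a}, W) + t(H^{+a}, 1-W) \;\geq\; 2^{1 - e(H) - ah}
\]
for every graphon $W\colon [0,1]^2 \to [0,1]$, where $h := |V(H)|$. Because the $a$ apex vertices are mutually non-adjacent and each joined to all of $V(H)$, the homomorphism densities factor as
\[
t(H^{+a}, W) = \int p_W(\bar x)\, N_W(\bar x)^a\, d\bar x,
\]
with $p_W(\bar x) := \prod_{uv \in E(H)} W(x_u, x_v)$ and $N_W(\bar x) := \int \prod_{v \in V(H)} W(y, x_v)\, dy$, and analogously for $1 - W$. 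This expresses the whole family $\{H^{+a}\}_{a \geq 1}$ through the same pair of functions $(p_W, N_W)$, with $a$ appearing only as an exponent.

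I would organise the proof by splitting the connected bipartite graphs on at most five vertices into two groups. The \emph{star} cases $H \in \{K_1, K_{1,1}, K_{1,2}, K_{1,3}, K_{1,4}\}$ admit the identity $K_{1,k}^{+a} = K_{1,k,a} = (K_{k,a})^{+1}$: the vertex in the part of size one is an apex joined to the complete bipartite graph $K_{k,a}$. Since $K_{k,a}$ is complete bipartite and hence Sidorenko, Sidorenko's $a=1$ apex theorem cited in the introduction immediately yields commonness of $K_{1,k,a}$, settling all star cases. This leaves the six \emph{non-star} graphs $H \in \{P_4, P_5, S(1,1,2), C_4, \text{banner}, K_{2,3}\}$, for which no such reduction to a bipartite Sidorenko graph with a single apex exists once $a \geq 2$.

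For these non-star $H$ the plan is to combine a pointwise Jensen step on the apex factor with a flag-algebra certification for the residual finite-dimensional inequality. For $a \geq 1$, convexity of $t \mapsto t^a$ applied to weights $p_W, p_{1-W}$ gives
\[
p_W N_W^a + p_{1-W} N_{1-W}^a \;\geq\; (p_W + p_{1-W})^{1 - a}\bigl(p_W N_W + p_{1-W} N_{1-W}\bigr)^a,
\]
and an outer Hölder or Jensen step then bounds the left side below by an expression in $t(H^{+1}, W) + t(H^{+1}, 1-W)$ and $t(H, W) + t(H, 1-W)$. The first quantity is controlled by Sidorenko's $a=1$ theorem, since each listed $H$ is bipartite Sidorenko, and the second is bounded below by $2^{1-e(H)}$ using Sidorenko for $H$ itself. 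For each fixed $H$ on at most five vertices the remaining polynomial inequality in a handful of edge-density parameters is a finite-dimensional statement and can be certified by the flag algebra method used earlier in the subject, such as for the $5$-wheel in~\cite{HHKNR12}.

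The main obstacle is uniformity in $a$: the vertex count of $H^{+a}$ grows with $a$, so the flag algebra method cannot be applied to $H^{+a}$ directly, and the Jensen reduction introduces the slack factor $(p_W + p_{1-W})^{1-a}$ which is unfavourable whenever $H$ has more than one edge. The crux is to absorb this slack without sacrificing the sharp constant $2^{1-e(H)-ah}$, which is attained at $W \equiv 1/2$. Verifying this for each of the six non-star graphs — where the bipartite Sidorenko slackness of $t(H, W) + t(H, 1-W)$ must exactly compensate the Jensen loss — is where the case analysis and flag-algebra estimates will carry the real weight of the proof.
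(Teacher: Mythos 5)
Your reduction to the one-apex case is sound and is essentially the same step the paper takes: by convexity (Jensen applied over the probability measure weighted by the copy of $H$) one gets
\[
t_{H^{+a}}(W)\;\geq\;\frac{t_{H^{+1}}(W)^a}{t_H(W)^{a-1}}
\]
for each of $W$ and $1-W$, and then a H\"older step (Lemma~\ref{lem:tm}) combines these into
\[
m_{H^{+a}}(W)\;\geq\;\frac{m_{H^{+1}}(W)^a}{m_H(W)^{a-1}}.
\]
Your pointwise Jensen formulation arrives at the same intermediate bound in a different order. Your treatment of the star cases via $K_{1,k}^{+a}=(K_{k,a})^{+1}$ and Sidorenko's one-apex theorem is a valid shortcut that the paper does not use, so that part is a genuinely different (and correct) route for four of the ten graphs.

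The gap is in the non-star cases, and it is precisely where you say ``flag-algebra estimates will carry the real weight of the proof'' without saying \emph{what} inequality the flag algebra should certify. Bounding $m_{H^{+1}}(W)$ and $m_H(W)$ separately by commonality does not work: the quantity $m_H(W)^{a-1}$ sits in the denominator, so a lower bound on $m_H(W)$ hurts you, and the trivial upper bound $m_H(W)\le 1$ only recovers the target constant when $e(H)\le 1$. What is actually needed is a \emph{linear comparison} between $m_{H^{+1}}$ and $m_H$, namely
\[
m_{H^{+1}}(W)\;\geq\;2^{-v(H)}\,m_H(W)
\]
for every graphon $W$ (this is the paper's Lemma~\ref{lem:flag}, certified by a flag-algebra computation for each of the ten graphs). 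Substituting this into the reduction makes the powers of $m_H(W)$ telescope: $m_{H^{+a}}(W)\geq m_H(W)/2^{a\cdot v(H)}\geq 2^{1-e(H)-a\cdot v(H)}=2^{1-e(H^{+a})}$, which is sharp at $W\equiv 1/2$ for every $a$. Without identifying this specific linear form, the ``Jensen loss'' you worry about is not absorbed and the argument does not close; once you have it, the case split into star and non-star graphs is also unnecessary, since Lemma~\ref{lem:flag} is uniform over all ten graphs.
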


The proof of Theorem~\ref{thm:Krst} relies on the computer-assisted flag algebra method, but we also give a computer-free proof for some cases. 
In particular, we prove without using computers that the octahedron graph, i.e., $C_4^{+2}=K_{2,2,2}$, is common, and generalise it to the so-called beachball graphs $C_{2k}^{+2}$ for every $k\geq2$ (see Theorem~\ref{thm:Bk}). 

\section{Preliminaries}
A \emph{graph homomorphism} from a graph $H$ to a graph $G$ is a vertex map that preserves adjacency.
Let $\Hom(H,G)$ denote the set of all homomorphisms from $H$ to $G$ and let $t_{H}(G)$ be the probability that a uniform random mapping from $H$ to $G$ is a homomorphism. i.e., 
$t_{H}(G)=\frac{|\Hom(H,G)|}{v(G)^{v(H)}}$.

The graph homomorphism density $t_H(G)$ naturally extends to weighted graphs and their limit object \emph{graphons}, i.e., measurable symmetric functions $W:[0,1]^2\rightarrow[0,1]$. We define
\begin{align*}
    t_H(W) := \E \left( \prod_{uv \in E(H)} W(x_u,x_v) \right),
\end{align*}
where $\E$ denotes the integration with respect to the Lebesgue measure on $[0,1]^{v(H)}$.
One may see that the original definition of $t_H(G)$ corresponds to the case $W=W_G$, where $W_G$ is the block 0-1 graphon constructed by the adjacency matrix of $G$.
As nonnegativity of $W$ is unnecessary for the definition, we shall also use $t_H(U):= \E \left( \prod_{uv \in E(H)} U(x_u,x_v) \right)$ for \emph{signed graphons} $U$, i.e., measurable symmetric functions $U:~[0,1]^2\rightarrow[-1,1]$. 

Given a graphon $W$, a $W$-random graph of order $n$ is a graph obtained from $W$ by sampling $n$ points from $[0,1]$ independently and uniformly at random, associating each point with one of the $n$ vertices, and joining two vertices $x,y \in[0,1]$ by an edge with probability $W(x,y)$. It can be proven (see, for example,~\cite{L12}) that if $G_n$ is a $W$-random graph on $n$ vertices, then for every graph $H$ the homomorphism density $t_H(G_n)$ converges to $t_H(W)$ with probability one.

The number of monochromatic copies of a graph $H$ in any 2-edge-colouring of a complete graph can be viewed as the number of copies of $H$ in the graph formed by edges in the first color summed up with the number of copies of $H$ in its complement. Similarly, the density of monochromatic (labelled) copies of $H$ in a 2-edge-colouring can be rewritten as
\begin{align*}
    m_H(W) := t_H(W)+t_H(1-W).
\end{align*}
Note that $m_H(W)=m_H(1-W)$ and $C(H)=\min_W m_H(W)$, where the minimum is taken over all graphons $W$. Indeed, the minimum exists by the compactness of the space of graphon under the cut norms and the latter follows from considering the $W$-random graphs explained in the previous paragraph. Thus, a graph $H$ is common if and only if $m_H(W)\geq 2^{1-e(H)}$ for each graphon $W$.

\medskip

 Let $\ev(H)$ be the family of subgraphs of $H$ with even number of edges and let $\ev_+(H)$ be the collection of nonempty graphs in $\ev(H)$. 
 Then, with $U:=2W-1$,
 \begin{align}\label{eq:even}
     m_H(W) &=  t_H\left(\frac{1+U}{2}\right) +t_H\left(\frac{1-U}{2}\right)\nonumber \\
     &=2^{1-e(H)}\sum_{F\in \ev(H)} t_F(U) = 2^{1-e(H)}\left(1 + \sum_{F\in \ev_+(H)}t_F(U)\right). 
\end{align}
Hence, $H$ is common if and only if $\sum_{F\in \ev_+(H)}t_F(U)\geq 0$ for every signed graphon $U$.

An immediate consequence of this expansion is a well-known formula by Goodman~\cite{G59}.
\begin{lemma}[Goodman's formula]\label{lem:goodman}
For every graphon $W$, $m_{K_3}(W)=\frac{3}{2}m_{K_{1,2}}(W)-\frac{1}{2}$.
\end{lemma}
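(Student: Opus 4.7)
My plan is to derive both sides of the identity directly from the expansion~(\ref{eq:even}) with $U := 2W - 1$, and then verify that the two expressions agree.

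First I would apply~(\ref{eq:even}) to $H = K_3$. Any subgraph of $K_3$ (with the convention used in the expansion, i.e., indexed by a subset of edges) has $0$, $1$, $2$, or $3$ edges, and the only ones with an even number of edges are the empty subgraph and the three two-edge subgraphs, each isomorphic to $K_{1,2}$. Since $t_F(U)$ depends only on the edges of $F$ (isolated vertices integrate out to $1$), each of these three two-edge subgraphs contributes $t_{K_{1,2}}(U)$. Thus~(\ref{eq:even}) gives
\begin{equation*}
    m_{K_3}(W) \;=\; 2^{1-3}\bigl(1 + 3\, t_{K_{1,2}}(U)\bigr) \;=\; \tfrac{1}{4} + \tfrac{3}{4}\, t_{K_{1,2}}(U).
\end{equation*}

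Next I would apply~(\ref{eq:even}) to $H = K_{1,2}$, whose subgraphs have $0$, $1$, or $2$ edges. The only ones in $\ev(K_{1,2})$ are the empty subgraph and $K_{1,2}$ itself, so
\begin{equation*}
    m_{K_{1,2}}(W) \;=\; 2^{1-2}\bigl(1 + t_{K_{1,2}}(U)\bigr) \;=\; \tfrac{1}{2} + \tfrac{1}{2}\, t_{K_{1,2}}(U).
\end{equation*}
Multiplying this by $\tfrac{3}{2}$ and subtracting $\tfrac{1}{2}$ gives exactly the expression computed for $m_{K_3}(W)$, which proves the identity.

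The only thing that needs a bit of care is the bookkeeping of subgraphs indexed by edge subsets versus isomorphism classes: one must remember that $K_3$ has three distinct two-edge subgraphs (all isomorphic to $K_{1,2}$), which is what produces the factor of $3$ and hence the coefficient $\tfrac{3}{2}$. Beyond this, the proof is a routine application of the already-derived formula~(\ref{eq:even}), so there is no substantial obstacle.
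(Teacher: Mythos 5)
Your proof is correct and is essentially the same as the paper's: the paper also applies \eqref{eq:even} to both $K_3$ and $K_{1,2}$ to get $m_{K_3}(W) = \tfrac{3}{4}t_{K_{1,2}}(U) + \tfrac{1}{4}$ and $m_{K_{1,2}}(W) = \tfrac{1}{2}t_{K_{1,2}}(U) + \tfrac{1}{2}$, then combines them. You simply spell out the subgraph count that the paper leaves implicit.
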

\begin{proof}
By~\eqref{eq:even}, $m_{K_3}(W) = \frac{3}{4}t_{K_{1,2}}(U) +\frac{1}{4}$ and $m_{K_{1,2}}(W) = \frac{1}{2}t_{K_{1,2}}(U)+\frac{1}{2}$.
\end{proof}

\medskip

The following is an easy consequence of H\"older's inequality, which will be repeatedly used.
\begin{lemma}\label{lem:tm}
Let $H,F$, and $J$ be graphs, $W$ a graphon, and $k$ and $\ell$ positive integers with $\ell\geq k$.
If
\[
t_H(W)\geq \frac{t_J(W)^\ell}{t_F(W)^{k-1}} \text{\,\,\,\, and \,\,\,\,} t_H(1-W)\geq \frac{t_J(1-W)^\ell}{t_F(1-W)^{k-1}},
\]
then
\begin{align*}
    m_H(W) \geq 2^{k-\ell}\frac{m_J(W)^\ell}{m_F(W)^{k-1}}.
\end{align*}
\end{lemma}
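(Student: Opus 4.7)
The plan is to first reduce the statement to a purely numerical two-variable inequality, and then verify that inequality by Hölder. Combining the two hypotheses with $m_H(W) = t_H(W) + t_H(1-W)$ yields
\[
m_H(W) \;\geq\; \frac{t_J(W)^\ell}{t_F(W)^{k-1}} + \frac{t_J(1-W)^\ell}{t_F(1-W)^{k-1}},
\]
so, abbreviating $a := t_J(W)$, $b := t_J(1-W)$, $c := t_F(W)$, $d := t_F(1-W)$ (and noting $m_J(W)=a+b$, $m_F(W)=c+d$), the claim reduces to the purely numerical inequality
\[
\frac{a^\ell}{c^{k-1}} + \frac{b^\ell}{d^{k-1}} \;\geq\; 2^{k-\ell}\,\frac{(a+b)^\ell}{(c+d)^{k-1}}.
\]

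For this numerical inequality, I would write $a = \bigl(a^\ell/c^{k-1}\bigr)^{1/\ell}\cdot c^{(k-1)/\ell}$ and analogously for $b$, then apply Hölder's inequality to the two-term sum $a+b$ with conjugate exponents $\ell$ and $\ell/(\ell-1)$. Raising the resulting bound to the $\ell$-th power gives an upper estimate for $(a+b)^\ell$ as a product of $a^\ell/c^{k-1} + b^\ell/d^{k-1}$ and $\bigl(c^{(k-1)/(\ell-1)} + d^{(k-1)/(\ell-1)}\bigr)^{\ell-1}$. To absorb the latter factor cleanly into $(c+d)^{k-1}$ with the correct constant, I would then use concavity of $t\mapsto t^{(k-1)/(\ell-1)}$ on $[0,\infty)$; this step is valid precisely because the hypothesis $\ell\geq k$ forces the exponent into $[0,1]$. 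Rearranging yields the desired inequality, with equality when $a/c = b/d$.

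No serious obstacle is expected: the argument is a textbook two-term Hölder estimate, and the role of the assumption $\ell\geq k$ is exactly to guarantee that concavity, rather than convexity, applies in the final step. The only minor technicality is the degenerate case $cd=0$, which can be handled by a standard limiting argument, or alternatively by noting that if $t_F(W)=0$ then the hypothesis forces $t_J(W)=0$ as well (with the corresponding terms interpreted as $0$ on both sides).
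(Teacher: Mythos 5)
Your argument is correct, and it reduces to exactly the same numerical inequality
\[
\frac{a^\ell}{c^{k-1}}+\frac{b^\ell}{d^{k-1}}\;\geq\;2^{k-\ell}\,\frac{(a+b)^\ell}{(c+d)^{k-1}}
\]
that the paper proves, but the Hölder/power-mean decomposition is arranged differently. The paper multiplies the left side by $(c+d)^{k-1}$ and applies the $k$-factor two-term Hölder inequality $\prod_{i=1}^k(a_i^k+b_i^k)\geq\bigl(\prod a_i+\prod b_i\bigr)^k$, with one factor equal to the main fraction and $k-1$ copies of $c+d$, obtaining $(a^{\ell/k}+b^{\ell/k})^k$; it then finishes with convexity of $z\mapsto z^{\ell/k}$ on the $J$-side, where $\ell\geq k$ makes the exponent at least $1$. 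You instead bound $(a+b)^\ell$ from above via the standard two-term Hölder with conjugate exponents $\ell$ and $\ell/(\ell-1)$, and then absorb $\bigl(c^{(k-1)/(\ell-1)}+d^{(k-1)/(\ell-1)}\bigr)^{\ell-1}$ into $2^{\ell-k}(c+d)^{k-1}$ by concavity of $z\mapsto z^{(k-1)/(\ell-1)}$ on the $F$-side, where $\ell\geq k$ keeps the exponent in $[0,1]$. The two uses of $\ell\geq k$ are thus dual (convexity versus concavity), and both routes are of comparable length. One small point: your conjugate exponent $\ell/(\ell-1)$ is formally undefined when $\ell=1$ (hence $k=1$), so that trivial case should be noted separately, whereas the paper's $k$-factor formulation handles it uniformly; the degenerate case $t_F=0$ that you flag is likewise innocuous and treated implicitly in both proofs.
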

\begin{proof}
We use H\"older's inequality of the form
\begin{align*}
    \prod_{i=1}^k\int f_i(x)^k dx \geq \left(\int \prod_{i=1}^k f_i(x) dx \right)^k
\end{align*}
for nonnegative functions $f_i$.
Let the integration be the sum of two terms. Then 
\begin{align}\label{eq:holder}
    \prod_{i=1}^{k} (a_i^k+b_i^k) \geq \left(\prod_{i=1}^{k}a_i +\prod_{j=1}^k b_j\right)^k
\end{align}
for nonnegative numbers $a_i$ and $b_j$, it follows that
\begin{align*}
    m_H(W) &= t_H(W)+t_H(1-W) \geq \frac{t_J(W)^\ell}{t_F(W)^{k-1}}+\frac{t_J(1-W)^\ell}{t_F(1-W)^{k-1}}\\
    &= \left(\frac{t_J(W)^\ell}{t_F(W)^{k-1}}+\frac{t_J(1-W)^\ell}{t_F(1-W)^{k-1}}\right)\big(t_F(W)+t_F(1-W)\big)^{k-1} m_F(W)^{-k+1}\\
    &\geq_{\eqref{eq:holder}} \left(t_J(W)^{\frac{\ell}{k}} +t_J(1-W)^{\frac{\ell}{k}}\right)^{k}m_F(W)^{-k+1}\\
    &\geq 2^{k-\ell}\frac{m_J(W)^{\ell}}{m_F(W)^{k-1}}.
\end{align*}
Indeed, the first inequality is H\"older's inequality~\eqref{eq:holder} and the second follows from convexity of the function $f(z)=z^{\ell/k}$, as $\ell\geq k$.
\end{proof}

For the proof of Theorem~\ref{thm:adding_tree}, we take an information-theoretic approach. We state the following fact about entropy without proof and refer the reader to \cite{AS08} for more detailed information on entropy and conditional entropy.
 
\begin{lemma}\label{lem:entropy}
Let $X$ be a random variable taking values in a set $S$ and let $\HH(X)$ be the entropy of~$X$. Then $\HH(X)\leq\log|S|$.
\end{lemma}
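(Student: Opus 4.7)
The plan is to derive the inequality from the definition of entropy together with Jensen's inequality applied to the concave function $\log$. Recall that the Shannon entropy of a random variable $X$ taking values in a finite set $S$ is
\[
\HH(X) = \sum_{s\in S}\PP(X=s)\log\frac{1}{\PP(X=s)},
\]
with the convention that the summand is $0$ whenever $\PP(X=s)=0$. Let $S'\subseteq S$ denote the support of $X$, i.e., those $s\in S$ with $\PP(X=s)>0$, so we may restrict the sum to $s\in S'$.

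The main step is to view the right-hand side as an expectation and push $\log$ outside. Treating $\PP(X=\cdot)$ as a probability measure on $S'$, the quantity $\HH(X)$ is the expectation of $\log(1/\PP(X=s))$ under that measure. Since $\log$ is concave, Jensen's inequality gives
\[
\sum_{s\in S'}\PP(X=s)\log\frac{1}{\PP(X=s)} \;\leq\; \log\left(\sum_{s\in S'}\PP(X=s)\cdot\frac{1}{\PP(X=s)}\right) \;=\; \log|S'|.
\]
Since $|S'|\leq|S|$ and $\log$ is increasing, we conclude $\HH(X)\leq\log|S|$, as desired.

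There is no real obstacle here; the only minor point to handle carefully is the convention about zero-probability outcomes (using the standard $0\log 0=0$) so that the restriction from $S$ to the support $S'$ is legitimate before applying Jensen. Equality holds precisely when $X$ is uniformly distributed on $S$, which, while not needed for the stated inequality, is the standard tightness statement one should mention in passing. Since the excerpt already notes that the lemma is stated without proof and references \cite{AS08}, the exposition can be kept to these few lines.
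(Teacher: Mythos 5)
Your proof is correct and is the standard argument: restrict to the support, apply Jensen's inequality to the concave function $\log$, and use monotonicity of $\log$ to pass from $|S'|$ to $|S|$. The paper itself deliberately states this lemma \emph{without} proof, referring the reader to a textbook, so there is no paper proof to compare against; your three-line Jensen argument is exactly what one would expect the cited reference to contain. One small stylistic note: the lemma statement mentions random variables $Y$ and $Z$ that play no role, and the statement as given only requires $X$; your proof correctly ignores them, which is the right call.
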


\section{Triangle-trees}
To describe triangle-trees, it is convenient to use the notion of tree decompositions, introduced by Halin~\cite{Hal76}
and developed by Robertson and Seymour \cite{RS84}.
\begin{definition}
A \emph{tree-decomposition} of a graph $H$ is a pair $(\mathcal{F}, \TT)$ consisting of a family $\mathcal{F}$  of vertex-subsets of $H$ and  a tree $\TT$ with $V(\TT)=\mathcal{F}$ such that
\begin{enumerate}
\item $\bigcup_{X\in\mathcal{F}}X=V(H)$, 
\item for each $e \in E(H)$, there exists a set $X \in \mathcal{F}$ such that
$X$ contains $e$, and
\item for $X,Y,Z\in \mathcal{F}$, $X\cap Y\subseteq Z$ 
whenever $Z$ lies on the path from $X$ to $Y$ in $\TT$.
\end{enumerate}
\end{definition}
Following~\cite{CL16,L19}, we say that $H$ is a \emph{$J$-tree} if and only if there exists a tree decomposition $(\mathcal{F},\TT)$ such that the subgraph $H[X]$ of $H$ induced on $X\in \FF$ is isomorphic to $J$ and moreover, there is an isomorphism between $H[X]$ and $H[Y]$ that fixes $H[X\cap Y]$ whenever $XY\in E(\TT)$.
Such a tree-decomposition $(\FF,\TT)$ of $H$ is called a \emph{$J$-decomposition}.
When $J=K_3$, we simply say that $H$ is a triangle-tree with a triangle-decomposition $(\FF,\TT)$. It is straightforward to see that this definition is equivalent to the recursive one given in the introduction.

\medskip

For a triangle-tree $H$ with a triangle-decomposition $(\FF,\TT)$, one may easily relate $|\FF|$ to $v(H)$ and $e(H)$. Let $\varphi(H) := e(H) - v(H) + 1$ and $\kappa(H) := 2e(H) - 3v(H) + 3$.
\begin{lemma}\label{lem:tree_identities}
If $H$ is a triangle-tree with a triangle-decomposition $(\FF,\TT)$, then
$|\FF|=\varphi(H)$ and the number of edges $XY\in E(\TT)$ such that the subgraph $H[X\cap Y]$ is a single edge equals to $\kappa(H)$.
In particular, $\kappa(H) \le \varphi(H) - 1$ for every triangle-tree $H$.
\end{lemma}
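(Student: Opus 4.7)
The plan is to prove both identities by induction on $|\FF|$, exploiting the equivalence between the tree-decomposition definition and the recursive one (identify either a vertex or an edge of a new triangle with a vertex or edge of the existing triangle-tree). Along the induction, I will track how $v(H)$, $e(H)$, the decomposition $(\FF,\TT)$, and the number of ``edge-gluings'' (edges $XY\in E(\TT)$ with $|X\cap Y|=2$) evolve.

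For the base case, let $H = K_3$, so $(\FF,\TT)$ consists of a single node with no edges. Then $\varphi(H) = 3 - 3 + 1 = 1 = |\FF|$ and $\kappa(H) = 6 - 9 + 3 = 0$, which matches $|E(\TT)| = 0$. For the inductive step, suppose $H'$ is obtained from a triangle-tree $H$ by identifying a new triangle $T'$ with an existing triangle $T \in \FF$ along either a shared vertex or a shared edge. The updated decomposition $(\FF',\TT')$ is obtained by adding $T'$ as a new leaf of $\TT$ attached to $T$, so $|\FF'| = |\FF|+1$ in both cases. In the vertex-gluing case, $v(H') = v(H)+2$ and $e(H') = e(H)+3$, hence $\varphi(H') = \varphi(H)+1$ and $\kappa(H') = \kappa(H)$, while the number of edge-gluings in $\TT'$ stays the same. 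In the edge-gluing case, $v(H') = v(H)+1$ and $e(H') = e(H)+2$, so $\varphi(H') = \varphi(H)+1$ and $\kappa(H') = \kappa(H) + 1$, while the number of edge-gluings in $\TT'$ increases by exactly one. In every case the inductive hypothesis is preserved.

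For the final inequality, observe that the number of edge-gluings counted by $\kappa(H)$ is at most $|E(\TT)|$, and since $\TT$ is a tree on $|\FF| = \varphi(H)$ vertices, $|E(\TT)| = \varphi(H) - 1$. Hence $\kappa(H) \leq \varphi(H) - 1$, as claimed.

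The argument is almost entirely bookkeeping, and the only mild subtlety is to make sure the recursive and tree-decomposition definitions truly coincide, so that attaching a new triangle along a shared vertex or edge corresponds exactly to adding a new leaf to $\TT$; the paper has already remarked that this equivalence is straightforward, so I would simply invoke it rather than re-derive it.
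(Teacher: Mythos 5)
Your proof is correct, but it takes a genuinely different route from the paper's. You argue by induction on $|\FF|$ via the recursive gluing definition, tracking how $v(H)$, $e(H)$, $|\FF|$, and the edge-gluing count evolve when a new triangle is attached at a vertex or an edge; this rests on the equivalence between the recursive definition and the tree-decomposition definition, which the paper merely remarks is straightforward. The paper instead gives a one-shot double-counting argument directly from the tree-decomposition: writing $3|\FF| = \sum_{X\in\FF} e(H[X]) = \sum_{e\in E(H)} t_e$, where $t_e$ counts the bags containing $e$, and using that the bags containing $e$ form a subtree of $\TT$ with $t_e-1$ edges (each an edge-gluing along $e$), it obtains $e(H) = 3|\FF| - k$; the analogous vertex count gives $v(H) = 2|\FF|+1-k$, and both identities follow by linear algebra. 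Your approach buys a more constructive, step-by-step picture at the cost of an inductive setup (you must also argue that removing a leaf bag from an arbitrary triangle-decomposition yields a smaller triangle-decomposition, which is the reverse direction of the equivalence you invoke); the paper's approach is shorter, needs no induction, and works directly with any given decomposition. Both implicitly use that $H[X\cap Y]$ is always a vertex or an edge for adjacent bags, which holds because triangle-trees (as defined recursively) are connected.
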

\begin{proof}
Let $k:=k(\FF)$ be the number of edges $XY\in E(\TT)$ such that the subgraph $H[X\cap Y]$ is an edge.
For an edge $e\in E(H)$, let $t_e$ be the number of contributions of $e$ in the sum 
$\sum_{X\in\FF} e(H[X])$. That is,
\begin{align*}
    3|\FF|=\sum_{X\in\FF} e(H[X]) = \sum_{e\in E(H)} t_e.
\end{align*}
On the other hand, $t_e-1$ is equal to the number of edges $XY\in E(\TT)$ such that $H[X\cap Y]$ is the single-edge $\{e\}$, which proves $e(H)=3|\FF|-k$. 
Analogously, $v(H)=2|\FF|+1 - k$ and hence, $|\FF|=e(H)-v(H)-1=\varphi(H)$ and $k(\FF)=2e(H)-3v(H)+3=\kappa(H)$. Finally, $\kappa(H) = k \le e(\TT) = |\FF|-1 = \varphi(H) - 1$.
\end{proof}

The key ingredient in the proof of Theorem~\ref{thm:tritree} is the following lemma.
\begin{lemma}[\cite{L19}, Theorem 2.7]\label{lem:tree_hom}
        If $H$ is a $J$-tree with a $J$-decomposition $(\FF,\TT)$ and $W$ is a graphon with $t_{J}(W) > 0$, then
	\begin{align}\label{eq:Jtree_hom}
		t_H(W)\geq \frac{t_{J}(W)^{|\FF|}}{\prod_{XY\in E(\TT)} t_{H[X\cap Y]}(W)}.
	\end{align}
\end{lemma}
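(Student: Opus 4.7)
The plan is to prove the inequality by induction on $|\FF|$. The base case $|\FF|=1$ gives $H=J$, and the right-hand side collapses to $t_J(W)$, so the inequality holds with equality.

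For the inductive step, pick a leaf $L$ of $\TT$, let $P$ be its unique neighbour in $\TT$, and set $Z:=L\cap P$. The tree-decomposition axiom (a vertex shared by two bags lies in every bag on the connecting path) ensures that every vertex of $L\setminus Z$ belongs only to the bag $L$. Therefore $H':=H[V(H)\setminus(L\setminus Z)]$ is a $J$-tree on $|\FF|-1$ bags with induced decomposition $(\FF\setminus\{L\},\TT-L)$. Invoking the inductive hypothesis on $H'$, the desired bound for $H$ reduces to proving the one-step \emph{gluing inequality}
\begin{equation*}
t_H(W)\cdot t_{H[Z]}(W)\;\geq\;t_J(W)\cdot t_{H'}(W).\tag{$\star$}
\end{equation*}

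To handle $(\star)$, introduce the rooted densities
\[ p(x_Z):=\int\prod_{uv\in E(H[L])\setminus E(H[Z])}W(x_u,x_v)\,dx_{L\setminus Z} \]
and $\bar g(x_Z):=\int\prod_{uv\in E(H')}W(x_u,x_v)\,dx_{V(H')\setminus Z}$, together with $W_Z(x_Z):=\prod_{uv\in E(H[Z])}W(x_u,x_v)$. All four quantities then become integrals over $x_Z$: $t_H(W)=\int p\bar g\,dx_Z$, $t_{H'}(W)=\int\bar g\,dx_Z$, $t_J(W)=\int pW_Z\,dx_Z$, and $t_{H[Z]}(W)=\int W_Z\,dx_Z$. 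In the prototype two-bag case $H'=J$, the $J$-decomposition isomorphism $H[L]\cong H[P]$ fixing $Z$ forces $\bar g=p\cdot W_Z$ identically, and $(\star)$ is the standard Cauchy--Schwarz inequality $\bigl(\int pW_Z\bigr)^2\leq\bigl(\int p^2W_Z\bigr)\bigl(\int W_Z\bigr)$.

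The main obstacle is extending this to the general case, where $\bar g/W_Z$ is not literally $p$ but a more intricate rooted density arising from the further bags attached to $P$. In this form, $(\star)$ is equivalent to the statement that $p$ and $\bar g/W_Z$ have nonnegative covariance with respect to the probability measure $W_Z\,dx_Z/t_{H[Z]}(W)$, which is not a single application of Cauchy--Schwarz. My plan is to iterate the two-bag argument along the rest of the tree: peel off the leaves of $\TT-L$ one at a time and, using the $J$-decomposition isomorphism at each tree edge to match the peeled bag's rooted density against the remaining trunk, apply Cauchy--Schwarz at each step so that the positive correlation telescopes from pairwise nonnegative covariances along the edges of $\TT$. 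In effect, the overall bound $t_H(W)\geq t_J(W)^{|\FF|}/\prod t_{H[X\cap Y]}(W)$ arises as a composition of $|\FF|-1$ two-bag Cauchy--Schwarz steps, each contributing exactly the factor $t_J(W)/t_{H[X\cap Y]}(W)$ required by the lemma.
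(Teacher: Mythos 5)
Your inductive reduction is sound: peeling off a leaf bag $L$ with $Z:=L\cap P$ and invoking the hypothesis on $H'$ does correctly reduce Lemma~\ref{lem:tree_hom} to the single ``gluing inequality'' $(\star)$, and your reformulation of $(\star)$ as $\mathrm{Cov}_\mu(p,\bar g/W_Z)\geq 0$ with respect to $\mu\propto W_Z$ is also correct, as is the two-bag base case via Cauchy--Schwarz. However, you have not proved $(\star)$ in the general case, and this is not a cosmetic omission. You write that you ``plan to iterate the two-bag argument \dots so that the positive correlation telescopes from pairwise nonnegative covariances along the edges of $\TT$,'' but nonnegativity of a covariance does not decompose into pairwise Cauchy--Schwarz steps: already in the simplest nontrivial instance ($J=K_2$, $H=P_4$, so that $(\star)$ reads $t_{P_4}\geq t_{K_2}\,t_{P_3}$), the quantity to control is $\E_\nu[\lambda^3]-\E_\nu[\lambda]\E_\nu[\lambda^2]$ for the spectral measure $\nu$ of the constant function, and the sign of this expression is governed by the Perron--Frobenius domination $\lambda_{\max}\geq|\lambda_i|$ rather than by any local Cauchy--Schwarz inequality. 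The natural intermediate bound one might hope to telescope through, $t_{P_4}\,t_{K_2}\geq t_{P_3}^2$ (log-convexity of walk counts), in fact fails for non-regular $W$, so the telescoping you describe cannot proceed as stated. In short, $(\star)$ is genuinely global: it encodes a positivity of correlation between the rooted density of the detached bag and the rooted density of the entire remaining $J$-tree, and this is precisely the hard content of the lemma, not a footnote.

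The paper does not reprove this lemma; it is imported from~\cite{L19} (Theorem~2.7 there), whose argument is entropy-theoretic. One builds, via the Markov-tree lemma (stated here as Lemma~\ref{lem:tree_entropy}, i.e.\ \cite{L19}, Theorem~2.6), a joint distribution $\Y$ on $V(G)^{V(H)}$ whose projections to each bag $F\in\FF$ are the uniform distribution on labelled copies of $J$, with entropy exactly $|\FF|\log|\Hom(J,G)|-\sum_{XY\in E(\TT)}\HH((X_{i;A})_{i\in A\cap B})$; the inequality~\eqref{eq:Jtree_hom} then follows from $\HH(\Y)\leq\log|\Hom(H,G)|$. This is also how the present paper proves its refinement, Lemma~\ref{lem:add_tree}. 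The entropy construction handles the global dependency structure in one stroke, which is exactly what your Cauchy--Schwarz iteration lacks. If you want to salvage a Cauchy--Schwarz-style proof, you should either establish $(\star)$ directly (which appears to require a separate argument on par with the lemma itself) or abandon the leaf-removal induction in favour of the entropy method.
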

Lemma~\ref{lem:tree_hom} is basically just simplifying multiple applications of the Cauchy--Schwarz inequality or Jensen's inequality. 
For example, $K_{1,1,t}$ is a triangle-tree, since there is a triangle decomposition $(\FF,\TT)$ that consists of $|\FF|=t$ and the star $\TT$ on $\FF$ with $t-1$ leaves, where each vertex subset in $\FF$ induces a triangle; see Figure~\ref{fig:treedecomp}. Thus, Lemma~\ref{lem:tree_hom} gives $t_{K_{1,1,t}}(W)\geq t_{K_3}(W)^{t}/t_{K_2}(G)^{t-1}$, which also follows from a standard application of Jensen's inequality.

\begin{figure}[ht]
\centering
    \begin{tikzpicture}
    \draw 
    (0,0) node[vtx,label=above:$x$](x){}
    (1,0) node[vtx,label=above:$y$](y){}
    \foreach \i in  {1,2,3,4}
    {
    (-2+\i,-1) node[vtx,label=below:$v_\i$](v\i){}
    (x)--(v\i)--(y)
    }
    (x)--(y)
    ;
    
\begin{scope}[xshift=8cm]
\draw
(0,0) node[rectangle,draw](XX){$x,y,v_1$}
    \foreach \i in  {2,3,4}  {
    (-6+2*\i,-1) node[rectangle,draw](YY){$x,y,v_\i$}
    (XX)--(YY)
    }
    ;    
;    
\end{scope}    
    \end{tikzpicture}
    \caption{$K_{1,1,4}$ and its tree decomposition $(\FF,\TT)$.}
    \label{fig:treedecomp}
\end{figure}

\medskip

In order to prove Theorem~\ref{thm:tritree}, we are going to apply Lemma~\ref{lem:tree_hom} for $J=K_3$.
\begin{corollary}
If $H$ is a triangle-tree and $W$ is a nonzero graphon, then
\begin{align}\label{eq:tree_hom}
	t_H(W) \geq \frac{ t_{K_3}(W)^{\varphi(H)}}{t_{K_2}(W)^{\kappa(H)}}.
\end{align}
\end{corollary}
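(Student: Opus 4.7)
The plan is to obtain this inequality as an immediate consequence of Lemma~\ref{lem:tree_hom} applied with $J = K_3$, together with the structural identities established in Lemma~\ref{lem:tree_identities}. Concretely, given a triangle-tree $H$ with triangle-decomposition $(\FF,\TT)$, Lemma~\ref{lem:tree_hom} yields
\[
t_H(W) \;\geq\; \frac{t_{K_3}(W)^{|\FF|}}{\prod_{XY\in E(\TT)} t_{H[X\cap Y]}(W)},
\]
so the entire task reduces to identifying the numerator exponent and each factor in the denominator.

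First I would handle the numerator: by Lemma~\ref{lem:tree_identities}, $|\FF| = \varphi(H)$, which directly gives the exponent $\varphi(H)$ on $t_{K_3}(W)$ in the desired bound.

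Next I would analyze each factor $t_{H[X \cap Y]}(W)$ for $XY \in E(\TT)$. Since each bag $X \in \FF$ induces a copy of $K_3$ in $H$, and adjacent bags in the tree decomposition must share at least one vertex (this is the content of the recursive construction of triangle-trees, where a new triangle is glued along a vertex or an edge), we have $|X \cap Y| \in \{1,2\}$. Therefore $H[X \cap Y]$ is either a single vertex or a single edge; in the former case $t_{H[X\cap Y]}(W) = 1$, and in the latter case $t_{H[X \cap Y]}(W) = t_{K_2}(W)$. By the second part of Lemma~\ref{lem:tree_identities}, the number of tree-edges of the second type is exactly $\kappa(H)$, so the denominator collapses to $t_{K_2}(W)^{\kappa(H)}$.

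Combining these two observations yields the claimed bound
\[
t_H(W) \;\geq\; \frac{t_{K_3}(W)^{\varphi(H)}}{t_{K_2}(W)^{\kappa(H)}},
\]
and no step here is a genuine obstacle: the only minor point to check is that all the vacuous factors $t_{K_1}(W) = 1$ (coming from vertex-intersections) indeed contribute nothing, which is immediate from $t_{K_1}(W) = \E[1] = 1$, and that $t_{K_2}(W) > 0$, which follows from the hypothesis that $W$ is nonzero so that the denominator in Lemma~\ref{lem:tree_hom} is well-defined.
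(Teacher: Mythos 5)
Your proposal is correct and follows exactly the route the paper intends: apply Lemma~\ref{lem:tree_hom} with $J=K_3$, then use Lemma~\ref{lem:tree_identities} to identify $|\FF|=\varphi(H)$ and to count the $\kappa(H)$ tree-edges whose intersection bags are single edges. The paper states the corollary without spelling out these details, and your write-up supplies precisely the omitted bookkeeping (including the observation that vertex-intersections contribute factors of $t_{K_1}(W)=1$ and that, since two triangles sharing two vertices must share the edge between them, every size-two intersection induces $K_2$).
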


We are now ready to prove Theorem~\ref{thm:tritree}.

\begin{proof}[Proof of Theorem~\ref{thm:tritree}]
Let $H$ be a triangle-tree. If $W=1$ or $W=0$ almost everywhere, then $m_H(W)=1$. 
Otherwise, two applications of \eqref{eq:tree_hom} yields
\[
t_H(W)\geq \frac{t_{K_3}(W)^{\varphi(H)}}{t_{K_2}(W)^{\kappa(H)}}
\quad \mbox{and}\quad
t_H(1-W)\geq \frac{t_{K_3}(1-W)^{\varphi(H)}}{t_{K_2}(1-W)^{\kappa(H)}}
.\]
Therefore, by Lemma~\ref{lem:tm} with $J=K_3$, $H=K_2$, $\ell=\varphi(H)$ and $k=\kappa(H)+1$, we have
\begin{align*}
    m_H(W) \geq 2^{\kappa(H)+1-\varphi(H)} \cdot \frac{m_{K_3}(W)^{\varphi(H)}}{m_{K_2}(W)^{\kappa(H)}}
    = 2^{\kappa(H)+1-\varphi(H)} \cdot m_{K_3}(W)^{\varphi(H)}
    \geq 2^{\kappa(H)+1-3\varphi(H)}= 2^{1-e(H)}.
\end{align*}
Indeed, the last inequality uses the commonality of a triangle, i.e., $m_{K_3}(W) \geq 1/4$.
\end{proof}

To prove Theorem~\ref{thm:adding_tree}, we need a slightly more careful analysis than just a simple application of Lemma~\ref{lem:tree_hom}. The main tool is~\cite[Theorem~2.6]{L19}, which will be stated shortly.
Let $\FF$ be a family of subsets of $[k]:=\{1,2,\dots,k\}$.
A \emph{Markov tree} on $[k]$ is a pair $(\FF,\TT)$ with $\TT$ a tree on vertex set $\FF$ that satisfies
\begin{enumerate}
	\item $\bigcup_{F\in\mathcal{F}}F=[k]$ and
	\item for $A,B,C\in \mathcal{F}$, $A\cap B\subseteq C$ 
whenever $C$ lies on the path from $A$ to $B$ in $\TT$.
\end{enumerate}
This is an abstract tree-like structure without the graph structure considered in defining tree-decompositions. In particular, a tree-decomposition of $H$ is a Markov tree on $V(H)$. For more detailed explanation, we refer to~\cite{L19}. Let $V$ be a finite set and for each $F\in\FF$ let $\X_F=(X_{i;F})_{i\in F}$ be a random vector 	taking values in~$V^{F}$.
The following theorem states that there exist random variables $Y_1,Y_2,\dots,Y_k$ such that, for each $F\in \FF$, the two random vectors $(Y_i)_{i\in F}$ and $\X_{F}$ are identically distributed over $V^{F}$ and, moreover,
the maximum entropy under such constraints
can always be attained.

\begin{lemma}[\cite{L19}, Theorem~2.6]\label{lem:tree_entropy}
	Let $(\FF,\TT)$ be a Markov tree on $[k]$.
	Let $V$ be a finite set and for each $F\in\FF$ let $\X_F=(X_{i;F})_{i\in F}$ be a random vector 
	taking values in $V^F$.  
	If $(X_{i;A})_{i\in A\cap B}$ and $(X_{j;B})_{j\in A\cap B}$ are identically distributed
	whenever $AB\in E(\TT)$,
	then there exists $\Y=(Y_1,\dots,Y_k)$
	with entropy
	\begin{align}\label{eq:tree_entropy}
		\HH(\Y)
		=\sum_{F\in\FF}\HH(\X_F)
		-\sum_{AB\in E(\TT)}\HH((X_{i;A})_{i\in A\cap B})
	\end{align}
	such that $(Y_i)_{i\in F}$ and $\X_F$ are identically distributed over $V^{F}$ for all $F\in\FF$.
\end{lemma}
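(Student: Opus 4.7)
The plan is to construct $\Y$ by a recursive sampling procedure along $\TT$. First I would root $\TT$ at an arbitrary bag $R\in\FF$ and sample the coordinates $(Y_i)_{i\in R}$ according to the distribution of $\X_R$ on $V^R$. Then, processing the bags in a breadth-first order from $R$, for each non-root bag $C$ with parent $P$ (so $CP\in E(\TT)$), I would sample the new coordinates $(Y_i)_{i\in C\setminus P}$ conditionally on the already-realised values $(Y_i)_{i\in C\cap P}$, using the conditional law of $(X_{i;C})_{i\in C\setminus P}$ given $(X_{i;C})_{i\in C\cap P}$. The hypothesis that $(X_{i;A})_{i\in A\cap B}$ and $(X_{j;B})_{j\in A\cap B}$ are identically distributed whenever $AB\in E(\TT)$ is precisely what is needed for this conditional sampling to be consistent with the marginals produced so far.

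Next I would prove, by induction on the size of the processed subtree, that for every bag $F$ already handled the vector $(Y_i)_{i\in F}$ is identically distributed to $\X_F$. The base case is tautological. For the induction step, consider a freshly added bag $C$ with parent $P$: by construction the conditional distribution of $(Y_i)_{i\in C\setminus P}$ given $(Y_i)_{i\in C\cap P}$ matches that of $\X_C$, while the induction hypothesis and the hypothesis on $\TT$-edges give that $(Y_i)_{i\in C\cap P}$ has the law of $(X_{i;C})_{i\in C\cap P}$; multiplying conditional and marginal laws yields that $(Y_i)_{i\in C}$ is distributed as $\X_C$.

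For the entropy formula, I would apply the chain rule along the sampling order to obtain
\begin{align*}
\HH(\Y)=\HH(\X_R)+\sum_{C\neq R}\HH\bigl((Y_i)_{i\in C\setminus P}\,\big|\,(Y_i)_{i\in C\cap P}\bigr),
\end{align*}
where $P$ denotes the parent of $C$. By the marginal identity just verified, each conditional entropy equals $\HH(\X_C)-\HH((X_{i;C})_{i\in C\cap P})$, and reorganising the sum recovers \eqref{eq:tree_entropy}.

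The main obstacle, and the place where the Markov-tree property (condition~2) is essential, is verifying that appending a new bag $C$ does not perturb the marginal of $\Y$ on any previously processed bag $A$. This requires showing that the fresh coordinates in $C\setminus P$ are probabilistically independent of the old coordinates in $A\setminus P$ given $(Y_i)_{i\in P}$; but because $P$ separates $A$ from $C$ in $\TT$, condition~2 forces $A\cap C\subseteq P$, so the new draws touch only variables already fixed at time $P$. This separation argument is what lifts the edge-wise compatibility assumption to a global marginal statement and justifies the telescoping in the entropy calculation.
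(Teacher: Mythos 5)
The paper does not prove this lemma; it cites it verbatim as Theorem~2.6 of \cite{L19}. Your proof is correct and follows the natural and standard argument: root $\TT$, construct $\Y$ by conditional sampling along a traversal order, use the Markov-tree property (condition~2) to guarantee that the fresh coordinates $C\setminus P$ are genuinely new and that the conditional draw depends only on $(Y_i)_{i\in C\cap P}$, then telescope with the chain rule for entropy. The one detail worth making explicit is that the chain rule a priori conditions on \emph{all} previously realised coordinates, so you need to observe that by construction $(Y_i)_{i\in C\setminus P}$ is conditionally independent of everything else given $(Y_i)_{i\in C\cap P}$ — which your separation argument supplies — before replacing the full conditioning by conditioning on $(Y_i)_{i\in C\cap P}$ alone.
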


An entropy analysis using this lemma give the following corollary.
Recall that for a tree~$T$ and a graph $H$, we denote by  $T*_{u}^{v}H$ the graph obtained by identifying $u\in V(T)$ and $v\in V(H)$.
\begin{lemma}\label{lem:add_tree}
If $H$ is a triangle-tree  and $T$ a tree with at most $\kappa(H)$ edges, then
\begin{align}\label{eq:add_tree}
    t_{T*_{u}^{v}H}(W)\geq \frac{t_{K_3}(W)^{\varphi(H)}}{t_{K_2}(W)^{\kappa(H)-e(T)}}
\end{align}
for every $u\in V(T)$ and $v\in V(H)$.
\end{lemma}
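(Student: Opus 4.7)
The plan is to apply Lemma~\ref{lem:tree_entropy} to an extended Markov tree on $V(T*_u^v H)$. Take a triangle-decomposition $(\FF,\TT)$ of $H$ from Lemma~\ref{lem:tree_identities}, so that $|\FF|=\varphi(H)$ and the number of edge-intersections in $\TT$ equals $\kappa(H)$. Extend $\FF$ by adding a bag $B_e=\{w,w'\}$ for each edge $e=ww'\in E(T)$, and extend $\TT$ to a tree $\TT'$ by attaching these edge-bags in a structure mirroring $T$, with the first edge-bag $B_{e_0}$ (for the edge incident to $u$) attached to a triangle bag $F_0\in\FF$ containing $v$. The Markov property of $\TT'$ follows from that of $\TT$ together with the tree structure of $T$.

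On each triangle bag $F$, set $\X_F$ with the $K_3$-density $W(x_a,x_b)W(x_b,x_c)W(x_a,x_c)/t_{K_3}(W)$; on each edge-bag $B_e$, set $\X_{B_e}$ with the $K_2$-density $W(x_a,x_b)/t_{K_2}(W)$. Marginals on triangle-triangle intersections match by symmetry of the triangle density. For triangle-edge and edge-edge intersections at a single vertex, where the natural marginals disagree, the edge-bag distributions are adjusted iteratively from the attachment point outward so that the hypotheses of Lemma~\ref{lem:tree_entropy} are satisfied, yielding a joint random vector $\Y$ on $V(T*_u^v H)$.

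By Gibbs' inequality,
\[
\log t_{T*_u^v H}(W) \geq \HH(\Y) + \E_\Y\Bigl[\sum_{e'\in E(T*_u^v H)} \log W(Y_{e'})\Bigr].
\]
Evaluating bag-by-bag using the decomposition of Lemma~\ref{lem:tree_entropy}: each triangle bag contributes $\log t_{K_3}(W)$ (since $\HH(p_{K_3})+\E_{p_{K_3}}[\sum_e \log W]=\log t_{K_3}(W)$), each edge-bag of $T$ contributes $\log t_{K_2}(W)$, each edge-intersection of $\TT$ subtracts at most $\log t_{K_2}(W)$ (by Gibbs' inequality applied to the intersection density $q_\triangle$ against the edge distribution), and single-vertex intersections subtract a non-positive quantity (since a probability density on $[0,1]$ has differential entropy at most $0$). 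Summing yields
\[
\log t_{T*_u^v H}(W) \geq \varphi(H)\log t_{K_3}(W) + e(T)\log t_{K_2}(W) - \kappa(H)\log t_{K_2}(W),
\]
which exponentiates to the claimed inequality.

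The main obstacle is the careful construction of the edge-bag conditional distributions needed to reconcile marginal mismatches at triangle-edge intersections, and tracking that the bag-wise contributions remain at least $\log t_{K_3}(W)$ and $\log t_{K_2}(W)$ respectively after this adjustment. The hypothesis $e(T)\leq\kappa(H)$ ensures the exponent $\kappa(H)-e(T)$ on $t_{K_2}(W)$ in the denominator of the bound is nonnegative.
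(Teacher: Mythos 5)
Your plan (same Markov tree $(\FF',\TT')$, same invocation of Lemma~\ref{lem:tree_entropy}) is structurally the same as the paper's, though you cast it in the Gibbs-variational form over graphons while the paper works with a finite graph $G$, uniform random triangles, and discrete entropy counting, then passes to the limit. That difference is cosmetic. The substantive gap is in your bag-by-bag accounting, specifically the sentence claiming that ``the bag-wise contributions remain at least $\log t_{K_3}(W)$ and $\log t_{K_2}(W)$ respectively after this adjustment.'' Once you enforce marginal consistency with the triangle bags, the edge bags are \emph{forced} to carry the triangle-projected edge distribution $\mu \propto W(x,y)\,\E_z[W(x,z)W(y,z)]$ rather than the $K_2$-density $W/t_{K_2}(W)$, and by Gibbs the quantity $q := \HH(\mu)+\E_\mu[\log W]$ satisfies $q \le \log t_{K_2}(W)$, with equality only when $\mu$ is the $K_2$-density. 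Since this term enters with a positive sign for each edge bag of $T$, you cannot lower-bound it by $\log t_{K_2}(W)$; the inequality goes the wrong way, and as written the argument does not close.

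What rescues the argument -- and what the paper's proof makes explicit by tracking the edge-distribution entropy $h_e$ -- is that the very same quantity $q$ appears with a \emph{negative} sign for each of the $\kappa(H)$ two-element intersections in $\TT$ (the marginal of a uniform random triangle on an edge is again triangle-projected). The edge-bag terms and the $\TT$-intersection terms therefore combine into a single net term $(e(T)-\kappa(H))\,q$, and only because the hypothesis $e(T)\le\kappa(H)$ makes this coefficient non-positive does $q\le \log t_{K_2}(W)$ give $(e(T)-\kappa(H))\,q \ge (e(T)-\kappa(H))\log t_{K_2}(W)$. So the hypothesis $e(T)\le\kappa(H)$ is not merely cosmetic bookkeeping ensuring a nonnegative exponent in the final formula, as you suggest; it is precisely what flips the Gibbs inequality into the useful direction. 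Your proposal never identifies the edge-bag distribution concretely, never observes that it coincides with the $\TT$-intersection marginal, and states a false intermediate claim in its place, so the proof as written has a genuine hole even though the overall strategy is sound.
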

Using this lemma, the proof of Theorem~\ref{thm:adding_tree} is almost identical to that of Theorem~\ref{thm:tritree}.
\begin{proof}[Proof of Theorem~\ref{thm:adding_tree}]
Let $H$ be a triangle-tree such that $\kappa(H) \ge t$, $T$ a tree with at most $t$ edges, and $W$ a nonzero graphon.
By Lemma~\ref{lem:tree_identities}, $e(H)=3\varphi(H)-\kappa(H)$ and thus, \[e(T*_{u}^{v}H)=e(T)+e(H)=3\varphi(H)-\kappa(H)+e(T).\]
Combining \eqref{eq:add_tree} and Lemma~\ref{lem:tm} for $J=K_3$, $H=K_2$, $\ell=\varphi(H)$ and $k=\kappa(H)-e(T)+1$ yields
\[
    m_{T*_{u}^{v}H}(W) \geq 2^{\kappa(H)+1-e(T)-\varphi(H)} \cdot m_{K_3}(W)^{\varphi(H)}.
\]
Note that in order to apply Lemma~\ref{lem:tm}, we required $\kappa(H) \ge e(T)$.
As $m_{K_3}(W)\geq 1/4$, we have
\[
     m_{T*_{u}^{v}H}(W) \geq 2^{\kappa(H)+1-e(T)-3\varphi(H)} = 
     2^{1-e(H)-e(T)} = 2^{1-e(T*_{u}^{v}H)}.
\]
Therefore, $T*_{u}^{v}H$ is common.
\end{proof}

It remains to prove Lemma~\ref{lem:add_tree}.
\begin{proof}[Proof of Lemma~\ref{lem:add_tree}]
Let $(\FF,\TT)$ be a triangle-decomposition of $H$ and $k:=\kappa(H)$. Recall that $k$ is 
the number of edges $XY \in E(\TT)$ such that the subgraph $H[X \cap Y] \cong K_2$.
The first step is to find a natural tree-decomposition of $T*_{u}^{v}H$ that extends $(\FF,\TT)$.

Let $T$ be rooted at a leaf $x\in V(T)$ and suppose that we orient each edge of $T$ away from the root. Let $\S$ be a tree on $E(T)$, where the oriented edges $(u_1,v_1)$ and $(u_2,v_2)$ are adjacent if and only if $v_1=u_2$. One may easily check that
$(E(T),\S)$ is a tree decomposition of $T$. 
Now pick an edge $uu'\in E(T)$, which is a vertex of $\S$, and connect it to a vertex bag $X\in \FF$ that contains $v\in V(T)$ while identifying $u$ and $v$. This new tree $\TT'$, obtained by adding an edge between two vertices $uu'$ and $X$, gives a tree-decomposition $(\FF',\TT')$ of $T*_{u}^{v}H$, where $\FF':=V(\TT')=V(\TT)\cup V(\S)$.

Since the homomorphic density in a sequence of $W$-random graphs of increasing sizes converges to the homomorphic density in $W$, as explained in the preliminaries, it is enough to prove the inequality~\eqref{eq:add_tree} for an $n$-vertex graph $G$ instead of a graphon $W$.
For brevity, we identify the vertex set $V(T*_{u}^{v}H)$ with the set $[t]$ and let $1\in [t]$ be the vertex shared by $H$ and $T$.
For each $F\in \FF'$ with $|F|=3$, let $\X_F=(X_{i;F})_{i\in F}$ be a uniform random triangle in $G$, labelled by vertices in $F$.
If $|F|=2$ then let $\X_F=(X_{i;F})_{i\in F}$ be a random edge labelled by vertices in $F$ sampled in such a way that $\PP[\X_{F}=(v_1,v_2)]$ is proportional to the number of triangles that contains the edge~$v_1v_2\in E(G)$. We call this possibly non-uniform edge distribution \emph{triangle-projected}.

We claim that $(X_{i;A})_{i\in A\cap B}$ and $(X_{i;B})_{i\in A\cap B}$ are identically distributed. If $|A\cap B|=2$, then both distributions are triangle-projected. If $|A\cap B|=1$, i.e., $A\cap B=x\in V(T*_{u}^{v}H)$, then both distributions are proportional to the weighted degree sum $\sum_{x\subset e} p_e$ where $p_e$ is the probability of an edge being sampled by the triangle-projected distribution. Therefore, by Lemma~\ref{lem:tree_entropy}, there exists $\Y=(Y_1,\dots,Y_t)$ with entropy
\begin{align*}
		&\HH(\Y)
		=\sum_{F\in\FF'}\HH(\X_F)
		-\sum_{AB\in E(\TT')}\HH((X_{i;A})_{i\in A\cap B})\\
		&=\sum_{F\in\FF}\HH(\X_F) +\sum_{F\in E(T)}\HH(\X_F)
		-\sum_{AB\in E(\TT)}\HH((X_{i;A})_{i\in A\cap B})
		-\sum_{AB\in E(\S)}\HH((X_{i;A})_{i\in A\cap B})
		-\HH(Y_{1}).
	\end{align*}
Recall that the vertex 1 is the vertex shared by $T$ and $H$, so $Y_1$ means the random image of the vertex with respect to $\Y$.
For $F\in \FF$, $\HH(\X_F)=\log|\Hom(K_3,G)|$, since $\X_F$ is a uniform random triangle.
For $F\in E(T)$, $\HH(\X_F)$ is the entropy $h_e$ of the triangle-projected edge distribution.
There are exactly $k$ cases such that $|A\cap B|=2$ and $AB\in E(\TT)$, and for such cases, $\HH((X_{i;A})_{i\in A\cap B})=h_e$.
Thus, 
\begin{align*}
\HH(\Y)	&=|\FF|\log|\Hom(K_3,G)| + \sum_{F\in E(T)}\HH(\X_F)
		-\sum_{AB\in E(\TT')}\HH((X_{i;A})_{i\in A\cap B})
		\\&\geq |\FF|\log|\Hom(K_3,G)| - (k-e(T))h_e
		- (e(\TT')-k)\log|V(G)| \\
		&\geq  |\FF|\log|\Hom(K_3,G)| - (k-e(T))\log|\Hom(K_2,G)|
		- (e(\TT')-k)\log n
\end{align*}
Indeed, the first inequality follows from the bound $\HH((X_{i;A})_{i\in A\cap B})\leq\log n$ by Lemma~\ref{lem:entropy} when $|A\cap B|=1$, and the second follows from the bound $h_e\leq\log|\Hom(T*_{u}^{v}H,G)|$ by the same lemma.
Again by Lemma~\ref{lem:entropy}, $\HH(\Y)\leq\log |\Hom(H,G)|$. Thus,
\begin{align*}
    t_{T*_{u}^{v}H}(G)&=
    \frac{|\Hom(T*_{u}^{v}H,G)|}{n^{v(H)+v(T)-1}}
     \geq \frac{|\Hom(K_3,G)|^{|\FF|}}{|\Hom(K_2,G)|^{k-e(T)}n^{e(\TT')-k}}\cdot\frac{1}{n^{v(H)+v(T)-1}}\\
     &=\frac{t_{K_3}(G)^{|\FF|}}{t_{K_2}(G)^{k-e(T)}n^{e(\TT')+k-2e(T)}}\cdot\frac{n^{3|\FF|}}{n^{v(H)+v(T)-1}}
     =\frac{t_{K_3}(G)^{|\FF|}}{t_{K_2}(G)^{k-e(T)}},
\end{align*}
where the last equality follows from the identity $e(\TT')=e(\TT)+e(\S)+1 = |\FF|+e(T)-1$ and Lemma~\ref{lem:tree_identities}.
\end{proof}

\section{Beachball graphs and bipartite graphs with apex vertices}

The proof of Theorem~\ref{thm:Krst} combines our novel ideas and the flag algebra method developed by Razborov~\cite{R07}.
To demonstrate how the new method works without using flag algebras, we firstly prove that $K_{2,2,2}$ is common. 
\begin{theorem}\label{thm:K222}
The octahedron $K_{2,2,2}$ is common.
\end{theorem}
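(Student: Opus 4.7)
The plan is to deduce $m_{K_{2,2,2}}(W)\ge 2^{-11}=2^{1-e(K_{2,2,2})}$ from the commonness of the triangle via a single application of Lemma~\ref{lem:tm}. Concretely, I aim for the Sidorenko-type inequality
\[
  t_{K_{2,2,2}}(W) \ge t_{K_3}(W)^{4}
\]
for every nonzero graphon $W$. Together with its $1-W$ counterpart, Lemma~\ref{lem:tm} with $J=K_3$, $\ell=4$, and $k=1$ (so the $t_F^{k-1}$ factor disappears) then gives
\[
  m_{K_{2,2,2}}(W) \ge 2^{1-4}\,m_{K_3}(W)^{4} \ge 2^{-3}\cdot (1/4)^{4} = 2^{-11},
\]
matching the commonality bound exactly, with equality at $W\equiv 1/2$.

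The strategy for the key inequality exploits $K_{2,2,2}=C_4^{+2}$: peel off the two apex vertices $a_1,a_2$ and write
\[
  t_{K_{2,2,2}}(W)=\int_{a_1,a_2}\Bigl(\int \prod_{ij\in E(C_4)}W(x_i,x_j)\prod_{i=1}^{4}W(a_1,x_i)W(a_2,x_i)\,dx\Bigr)\,da_1\,da_2.
\]
The inner integral is a weighted $C_4$-density with vertex-weight measure $d\mu_{a_1,a_2}(x)=W(a_1,x)W(a_2,x)\,dx$. Because $C_4$ satisfies Sidorenko's conjecture (via two applications of Cauchy--Schwarz), the weighted version yields
\[
  t_{K_{2,2,2}}(W) \ge \int \frac{Q(a_1,a_2)^{4}}{P(a_1,a_2)^{4}}\,da_1\,da_2,
\]
where $P(a_1,a_2)=\int W(a_1,z)W(a_2,z)\,dz$ is the codegree and $Q(a_1,a_2)=\int W(x,y)\,W(a_1,x)W(a_2,x)W(a_1,y)W(a_2,y)\,dx\,dy$ is the weighted edge-count in the common neighbourhood. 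At $W\equiv 1/2$ one checks $Q^4/P^4=2^{-12}$ and the integral equals $t_{K_3}(W)^4$ on the nose, so the chain is tight at the extremal point. A final H\"older step, combined with the identities $\int P\,da=t_{K_{1,2}}(W)$, $\int Q\,da=t_{K_{1,1,2}}(W)$, and the triangle-tree estimate $t_{K_{1,1,2}}(W)\ge t_{K_3}(W)^{2}/t_{K_2}(W)$ given by Lemma~\ref{lem:tree_hom}, would close the loop down to $t_{K_3}(W)^{4}$.

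The main obstacle I anticipate is making this last H\"older step sharp. A naive application only yields $\int Q^{4}/P^{4}\,da\ge t_{K_{1,1,2}}^{4}/t_{K_{1,2}}^{3}$, and chaining with $t_{K_{1,1,2}}\ge t_{K_3}^{2}/t_{K_2}$ produces $t_{K_3}^{8}/(t_{K_2}^{4}\,t_{K_{1,2}}^{3})$, which is strictly weaker than $t_{K_3}^{4}$ whenever $t_{K_{1,2}}>t_{K_2}^{2}$. Closing the gap will require either choosing H\"older exponents adapted to the joint distribution of $(P,Q)$ so that the stray $t_{K_2}$ and $t_{K_{1,2}}$ factors cancel via Sidorenko's inequality for $K_{1,2}$, or recasting the entire argument as an entropy inequality in the style of Lemma~\ref{lem:tree_entropy}. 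The latter route must contend with the fact that $K_{2,2,2}$ is not a triangle-tree, since its triangle-intersection graph is the $3$-cube $Q_3$ rather than a tree; any such entropy argument therefore has to accommodate the cycles present in that intersection structure.
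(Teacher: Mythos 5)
Your plan hinges on establishing the Sidorenko-type inequality $t_{K_{2,2,2}}(W) \ge t_{K_3}(W)^4$, and you correctly identify that your chain of H\"older steps does not close. But the problem is worse than a loss in the optimisation: the target inequality is simply false. Take $W$ equal to $1$ on the complete bipartite graphon and $\varepsilon$ on the two diagonal blocks. Then $t_{K_3}(W) = \tfrac{3}{4}\varepsilon + O(\varepsilon^3)$, so $t_{K_3}(W)^4 \approx \tfrac{81}{256}\varepsilon^4$. On the other hand, the minimum number of monochromatic edges of $K_{2,2,2}$ under a $2$-colouring of its vertices is $4$ (e.g.\ place one of the three non-adjacent pairs on one side), and one checks that exactly $18$ of the $64$ vertex bipartitions achieve this minimum, so $t_{K_{2,2,2}}(W) \approx \tfrac{18}{64}\varepsilon^4 = \tfrac{72}{256}\varepsilon^4 < \tfrac{81}{256}\varepsilon^4$. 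Hence $t_{K_{2,2,2}}(W) < t_{K_3}(W)^4$ for small $\varepsilon$, and no amount of H\"older, entropy, or weighted-Sidorenko machinery can rescue a false inequality. This is precisely why, as the paper notes, $m_{C_4}(W)$ does not drop to its quasirandom value of $1/8$ unless $W\equiv 1/2$, so bounding denominators by their quasirandom values throws away too much.

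The paper's actual route is structurally different: it first reduces to $m_{K_{2,2,2}}(W) \ge m_D(W)^4/\big(m_{K_{1,2}}(W)^2\, m_{C_4}(W)\big)$ using Lemma~\ref{lem:tm} (here $D = K_{1,1,2}$), and then replaces the commonality of $D$ and the trivial lower bound on $m_{C_4}$ by the \emph{joint} estimate of Lemma~\ref{lem:diamond}, $m_D(W) - 1/16 \ge c\,(m_{C_4}(W) - 1/8)$, proved via the signed-graphon expansion~\eqref{eq:even} and the Cauchy--Schwarz bound $|t_{K_3^+}(U)|\le \sqrt{t_{K_{1,2}}(U)\,t_{C_4}(U)}$. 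Combined with Goodman's identity and $m_D \ge m_{K_3}^2$, this turns the whole bound into a one-variable rational function of $\sqrt{m_D(W)}$ that is monotone for a well-chosen $c$, yielding the required $2^{-11}$. In short, the missing idea is that the minimiser of $m_{K_{2,2,2}}$ cannot be controlled by $t_{K_3}$ alone; one must couple the lower bound on the ``wheel-like'' numerator with the upper bound on the $m_{C_4}$ denominator, which is exactly what Lemma~\ref{lem:diamond} achieves.
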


By a standard application of the Cauchy--Schwarz inequality (or Lemma~\ref{lem:tree_hom}), it is easy to see that $t_{K_{2,2,2}}(W) \geq t_{K_{1,2,2}}(W)^2/t_{C_4}(W)$. Then by Lemma~\ref{lem:tm}, we immediately obtain 
\begin{align}\label{eq:222v122}
    m_{K_{2,2,2}}(W) \geq \frac{m_{K_{1,2,2}}(W)^2}{m_{C_4}(W)}.
\end{align}
By Sidorenko's theorem~\cite{Sid96}, the 4-wheel $K_{1,2,2}$ is common; however, $m_{C_4}(W)=1/8$ if and only if $W=1/2$ almost everywhere, i.e., $W$ is quasirandom, the naive approach using commonality of~$K_{1,2,2}$ while bounding $m_{C_4}$ from above does not work. 
We circumvent this difficulty by comparing $m_{K_{1,2,2}}(W)$ and $m_{C_4}(W)$. 
Another application of the Cauchy--Schwarz inequality, together with Lemma~\ref{lem:tm}, gives
\begin{align*}
    m_{K_{1,2,2}}(W)\geq \frac{m_{K_{1,1,2}}(W)^2}{m_{K_{1,2}}(W)}.
\end{align*}
For brevity, denote $D:=K_{1,1,2}$, which is the \emph{diamond} graph obtained by adding a diagonal edge to the 4-cycle.
The following lemma, partly motivated by~\cite{HHKNR12}, enables us to compare $m_{D}(W)$ and $m_{C_4}(W)$.
\begin{lemma}\label{lem:diamond}
Let $0\leq c\leq (3-\sqrt{5})/4$.
For any graphon $W$, the following inequality holds:
\begin{align*}
    m_D(W) -1/16 \geq c(m_{C_4}(W)-1/8). 
\end{align*}
\end{lemma}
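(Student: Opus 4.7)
The plan is to expand both sides of the claimed inequality via~\eqref{eq:even} with $U = 2W - 1$, reduce to a polynomial inequality in four graph densities, and close it with a single Cauchy--Schwarz step combined with a pointwise bound on the bowtie density.

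First, I enumerate the nonempty even-edge subgraphs of $D$ and $C_4$. The diamond $D$ has five edges; the ten two-edge subgraphs split into $8$ copies of $P_3$ and $2$ copies of $2K_2$, and the five four-edge subgraphs split into $4$ copies of the paw (a triangle with a single pendant edge) and one $C_4$. For $C_4$ itself the nonempty even subgraphs are $4$ copies of $P_3$, $2$ copies of $2K_2$, and $C_4$. Writing $p := t_{P_3}(U)$, $d := t_{K_2}(U)$, $q := t_{C_4}(U)$, $w := t_{\mathrm{paw}}(U)$, and using $t_{2K_2}(U) = d^2$, the expansion~\eqref{eq:even} gives
\[
m_D(W) - \tfrac{1}{16} = \tfrac{1}{16}\bigl(8p + 2d^2 + q + 4w\bigr), \qquad m_{C_4}(W) - \tfrac{1}{8} = \tfrac{1}{8}\bigl(4p + 2d^2 + q\bigr),
\]
so the target reduces to $(8-8c)p + (2-4c)d^2 + (1-2c)q + 4w \ge 0$. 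Since $c\le 1/2$, the $d^2$-term is already nonnegative and can be dropped.

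The main step is to lower bound $4w$. Set $g(b) := \int U(x,b)\,dx$ and $h(b) := \int U(a,b)U(b,d)U(a,d)\,da\,dd$, so that $p = \int g^2$, $w = \int g\,h$, and let $t_{\mathrm{bowtie}}(U) := \int h^2 \ge 0$ denote the density of the bowtie (two triangles sharing a common vertex). For any $\gamma>0$, the nonnegativity $\int (\sqrt{\gamma}\,g + \gamma^{-1/2}h)^2 \ge 0$ gives
\[
2w \;\ge\; -\gamma\,p - \tfrac{1}{\gamma}\,t_{\mathrm{bowtie}}(U).
\]
I will then prove the auxiliary bound $t_{\mathrm{bowtie}}(U) \le t_{C_4}(U) = q$ for every signed graphon with $|U|\le 1$. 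Writing $h(b) = \int_c U(b,c) T_U(b,c)\,dc$, with $T_U(b,c) := \int_a U(a,b)U(a,c)\,da$, the Cauchy--Schwarz inequality in $c$ together with the pointwise bound $\int U(b,c)^2\,dc \le 1$ yields $h(b)^2 \le \int T_U(b,c)^2\,dc$; integrating over $b$ produces $t_{\mathrm{bowtie}}(U) \le \int\!\int T_U(b,c)^2\,db\,dc = q$.

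Putting the bounds together, the target reduces to $(8 - 8c - 2\gamma)p + (1 - 2c - 2/\gamma)q \ge 0$. Since $p, q \ge 0$ independently, it is enough to find $\gamma>0$ satisfying both $\gamma \le 4(1-c)$ and $\gamma \ge 2/(1-2c)$; these are compatible precisely when $2/(1-2c) \le 4(1-c)$, i.e., when $4c^2 - 6c + 1 \ge 0$, whose smaller root is exactly $(3-\sqrt{5})/4$. At this critical $c$ the choice $\gamma = 1+\sqrt{5}$ saturates both sufficient conditions. The technical heart of the argument is the comparison $t_{\mathrm{bowtie}}(U) \le t_{C_4}(U)$: it uses the pointwise bound $|U|\le 1$ and not merely the signedness of $U$, and its tightness for rank-one $\pm 1$-valued~$U$ is exactly what forces the quadratic $4c^2 - 6c + 1$ and thus the threshold $(3-\sqrt{5})/4$.
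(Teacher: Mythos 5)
Your proof is correct and reaches exactly the same expansion into five-vertex densities and the same discriminant condition $4c^2 - 6c + 1 \ge 0$; the routes diverge only in how the paw (i.e., $K_3^+$) term is controlled. The paper isolates the pendant vertex: it writes $t_{K_3^+}(U)=\E[\nu(x,z)\,U(z,x)\,\mu(z)]$, applies Cauchy--Schwarz once, and absorbs the edge factor with $|U|\le 1$ to land directly on $|t_{K_3^+}(U)|\le\sqrt{t_{K_{1,2}}(U)\,t_{C_4}(U)}$. You instead factor at the degree-three vertex, splitting $w=\int g\,h$ into the degree function $g$ and the local-triangle function $h$; Cauchy--Schwarz then gives $|w|\le\sqrt{p\cdot t_{\mathrm{bowtie}}(U)}$, and a second, cleanly stated auxiliary step $t_{\mathrm{bowtie}}(U)\le t_{C_4}(U)$ (again Cauchy--Schwarz plus $|U|\le 1$) recovers the same bound. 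Your decomposition passes through the bowtie density as an intermediate object, and the pointwise bound $|U|\le 1$ is spent on a triangle edge rather than on the pendant edge; otherwise the mechanisms coincide, and the Young-inequality parametrisation by $\gamma$ is a presentational variant of the paper's direct discriminant check on the quadratic form in $\sqrt{p}$ and $\sqrt{q}$. If anything, your route makes visible the comparison $t_{\mathrm{bowtie}}(U)\le t_{C_4}(U)$ for signed kernels bounded by one, which is a pleasant standalone fact; the paper's one-shot application is slightly shorter.
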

\begin{proof}
Using~\eqref{eq:even} with $U:=2W-1$ and $H=D$, we obtain
\begin{align*}
    m_D(W) -\frac{1}{16} = \frac{1}{16}\sum_{F\in \ev_+(D)} t_F(U)
    = \frac{1}{16}\big( 2t_{2\cdot K_2}(U)+ 8 t_{K_{1,2}}(U)  +4t_{K_3^+}(U)+t_{C_4}(U)\big),
\end{align*}
where $K_3^+$ denotes the triangle plus a pendant edge.
The same argument for $m_{C_4}(W)$ yields
\begin{align*}
    m_{C_4}(W) - \frac{1}{8} = \frac{1}{8}\big(2 t_{2\cdot K_2}(U)+ 4t_{K_{1,2}}(U) +t_{C_4}(U)\big),
\end{align*}
and thus,
\begin{align}\label{eq:expand}
    m_D(W) - \frac{1}{16} & ~-~ c(m_{C4}(W)-\frac{1}{8})\nonumber \\
    &= \frac{1}{16}\big((2-4c)t_{2\cdot K_2}(U)+(8-8c) t_{K_{1,2}}(U) +4t_{K_3^+}(U)+(1-2c)t_{C_4}(U)\big). 
\end{align}
Recall that $U=2W-1$ is not necessarily nonnegative, but $t_{K_{1,2}}(U)$, $t_{2\cdot K_2}(U)$, and $t_{C_4}(U)$ are always nonnegative, since $t_{K_{1,2}}(U)\geq t_{K_2}(U)^2=t_{2\cdot K_2}(U)$ and $t_{C_4}(U)\geq t_{K_{1,2}}(U)^2$. 
The key inequality we shall prove is
\begin{align}\label{eq:K3+}
    |t_{K_3^+}(U)| \leq \sqrt{t_{K_{1,2}}(U) t_{C_4}(U)}.
\end{align}
Suppose that this is true. Then \eqref{eq:expand} gives the lower bound
\begin{align*}
    (2-4c)t_{2\cdot K_2}(U)+(8-8c) t_{K_{1,2}}(U) -4\sqrt{t_{K_{1,2}}(U) t_{C_4}(U)}+(1-2c)t_{C_4}(U)
\end{align*}
for $ 16\big(m_D(W) -1/16 - c(m_{C4}(W)-1/8)\big)$. 
This is nonnegative whenever $(8-8c)(1-2c)\geq 4$ and $c\leq 1/2$.
Taking $0\leq c\leq \frac{3-\sqrt{5}}{4}$ suffices for this purpose.

It remains to prove~\eqref{eq:K3+}. Denote $\nu(x,z):=\E_{y}U(x,y)U(y,z)$ and $\mu(z):=\E_{w}U(z,w)$. Then
\begin{align*}
    |t_{K_3^+}(U)| &= \left|\E\big[ U(x,y)U(y,z)U(z,x)U(z,w)\big]\right|
    = \left|\E \big[\nu(x,z)U(z,x)\mu(z)\big]\right|\\
    &\leq (\E [\nu(x,z)^2])^{1/2} (\E [U(z,x)^2\mu(z)^2])^{1/2}\\
    &\leq  (\E [\nu(x,z)^2])^{1/2} (\E [\mu(z)^2])^{1/2}=\sqrt{t_{K_{1,2}}(U) t_{C_4}(U)}.\tag*{\qedhere} 
\end{align*}
\end{proof}

\begin{proof}[Proof of Theorem~\ref{thm:K222}]
Recall that repeated applications of Lemma~\ref{lem:tm} yield
\begin{align*}
    m_{K_{2,2,2}}(W) \geq \frac{m_{D}(W)^4}{m_{K_{1,2}}(W)^2 m_{C_4}(W)}.
\end{align*}
By Goodman's formula (Lemma~\ref{lem:goodman}), $m_{K_{1,2}}(W)=\frac{2}{3}m_{K_3}(W)+\frac{1}{3}$.
Together with the inequality $m_{D}(W)\geq m_{K_3}(W)^2$ that follows from Lemma~\ref{lem:tm} and the inequality $t_D(W)\geq t_{K_3}(W)^2/t_{K_2}(W)$,
we obtain
\begin{align}\label{eq:goodmanD}
    m_{K_{1,2}}(W)=\frac{2}{3}m_{K_3}(W)+\frac{1}{3}\leq \frac{2}{3}\sqrt{m_{D}(W)}+\frac{1}{3}.
\end{align}
Therefore, by using Lemma~\ref{lem:diamond},
\begin{align*}
    m_{K_{2,2,2}}(W) \geq \frac{m_{D}(W)^4}{m_{K_{1,2}}(W)^2 m_{C_4}(W)}
    \geq \frac{c\cdot m_{D}(W)^4}{\left(\frac{2}{3}\sqrt{m_{D}(W)}+\frac{1}{3}\right)^2 \left(m_D(W)-\frac{1}{16}+\frac{c}{8}\right)}.
\end{align*}
This lower bound is a rational function $h_c$ of $x:=\sqrt{m_D(W)}$, which simplifies to
\begin{align*}
    h_c(x):= \frac{144cx^8}{(2x+1)^2(16x^2-1+2c)}.
\end{align*}
We are looking at the range $x\geq 1/4$, as $m_{D}(W)\geq 1/16$ by commonality of $D$. 
Taking, for example, $c=1/7<\frac{3-\sqrt{5}}{4}$ makes the function $h_c$ monotone increasing on $x\geq 1/4$, and thus, $h_c(z)\geq f_c(1/4)=2^{-11}$.
This proves that $K_{2,2,2}$ is common.
\end{proof}

\begin{figure}[ht]
{
    \begin{tikzpicture}
    \draw
    \foreach \x in {0,1,...,6}{
    (60*\x:1) node[vtx](x\x){}
    }
    (x0)--(x2)--(x4)--(x0)--(x3)--(x5)--(x0)
    (x1)--(x2)--(x4)--(x1)--(x3)--(x5)--(x1)
    ;
    \draw[line width=1.5pt]
    {
    (x2)--(x4)--(x3)--(x5)--(x2)
    }
    ;
    \draw (0,-1.6)node{$B_4=K_{2,2,2}$};
    \end{tikzpicture}  
\hfill
    \begin{tikzpicture}
    \draw
     (2.5,1) node[vtx](a){}
     (2.5,-1) node[vtx](b){}
    \foreach \x in {0,1,...,5}{
    (a) -- (\x,0) node[vtx](x\x){} -- (b)
    }
    (x0)--(x5)
    (x0) to [bend left=10](x5)
    ;
    \draw (2.5,-1.6)node{$B_6$};
    \end{tikzpicture}
\hfill
    \begin{tikzpicture}
    \draw
     (1,1) node[vtx](a){}
     (1,-1) node[vtx](b){}
    \foreach \x in {0,1,2}{
    (a) -- (\x,0) node[vtx](x\x){} -- (b)
    }
    (x0)--(x2)
    ;
    \draw (1,-1.6)node{$D_2$};
    \end{tikzpicture}    
\hfill
    \begin{tikzpicture}
    \draw
     (2,1) node[vtx](a){}
     (2,-1) node[vtx](b){}
    \foreach \x in {0,1,...,4}{
    (a) -- (\x,0) node[vtx](x\x){} -- (b)
    }
    (x0)--(x4)
    ;
    \draw (2,-1.6)node{$D_4$};
    \end{tikzpicture}    
}
    \caption{Graphs $B_4$, $B_6$, $D_2$, and $D_4$.}
    \label{fig:ball}
\end{figure}
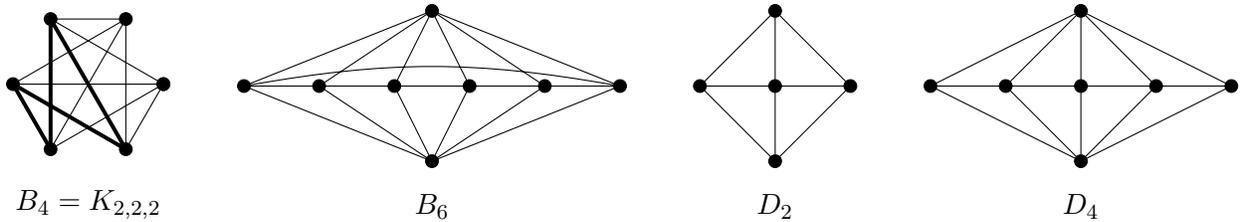

Let the \emph{$k$-beachball graph} $B_k$ be the graph obtained by gluing two copies of $k$-wheels along the $k$-cycle. In particular, $K_{2,2,2}$ is the 4-beachball, since it can be obtained by gluing two copies of 4-wheels along a 4-cycle. See Figure~\ref{fig:ball}, where the 4-cycle is marked bold. 
As a straightforward generalisation of Theorem~\ref{thm:K222}, we also prove the following theorem.

\begin{theorem}\label{thm:Bk}
For every $k\geq 2$, the $2k$-beachball $B_{2k}$ is common.
\end{theorem}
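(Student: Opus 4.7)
The plan is to lift the strategy used for Theorem~\ref{thm:K222} (the case $k=2$) to general $k$. Since $B_{2k}$ is obtained by gluing two copies of the $2k$-wheel $W_{2k}$ along the rim $C_{2k}$, a Cauchy--Schwarz step on the apex integration gives
\[ t_{B_{2k}}(W) \;\geq\; \frac{t_{W_{2k}}(W)^{2}}{t_{C_{2k}}(W)}. \]
Applying this bound both to $W$ and to $1-W$ and combining with Lemma~\ref{lem:tm} yields the Ramsey-multiplicity inequality
\[ m_{B_{2k}}(W) \;\geq\; \frac{m_{W_{2k}}(W)^{2}}{m_{C_{2k}}(W)}. \]
At the quasirandom graphon $W\equiv 1/2$ the right-hand side evaluates to exactly $2^{1-6k}=2^{1-e(B_{2k})}$, so the inequality is tight there, mirroring the structure already seen for $K_{2,2,2}$.

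Write $\alpha := m_{W_{2k}}(W) - 2^{1-4k}$ and $\beta := m_{C_{2k}}(W) - 2^{1-2k}$. Both are nonnegative, since even wheels are common (Sidorenko~\cite{Sid96}) and even cycles are common (as bipartite graphs satisfying Sidorenko's conjecture). A short algebraic manipulation shows that the target $m_{W_{2k}}^{2}/m_{C_{2k}} \geq 2^{1-6k}$ follows from the linear stability bound $\alpha \geq 2^{-2k-1}\beta$, because
\[ m_{W_{2k}}(W)^{2} - 2^{1-6k}\, m_{C_{2k}}(W) \;=\; 2^{2-4k}\alpha + \alpha^{2} - 2^{1-6k}\beta \;\geq\; \alpha^{2} \;\geq\; 0. \]
Hence the crux of the proof is to establish a diamond-type lemma generalising Lemma~\ref{lem:diamond}: for some constant $c_k \geq 2^{-2k-1}$,
\[ m_{W_{2k}}(W) - 2^{1-4k} \;\geq\; c_k \bigl(m_{C_{2k}}(W) - 2^{1-2k}\bigr). \]

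To prove this stability lemma, I would expand both sides in the signed basis $U := 2W - 1$ via \eqref{eq:even}. The expansion of $m_{C_{2k}} - 2^{1-2k}$ is supported on even-edge subgraphs of the rim, each a disjoint union of paths and thus with nonnegative signed density (by iterated Cauchy--Schwarz). The expansion of $m_{W_{2k}} - 2^{1-4k}$ contains the same rim-only terms (with different multiplicative weights) together with many additional terms involving spoke edges to the apex. A large block of spoke-containing contributions is automatically nonnegative, because integrating out the apex produces a square whenever the spokes attach to a rim-subgraph in a balanced (involution-invariant) pattern. The remaining sign-indefinite terms --- the direct analogues of $t_{K_{3}^{+}}(U)$ that appear in Lemma~\ref{lem:diamond} --- should then be absorbed by Cauchy--Schwarz bounds of the form
\[ |t_{F}(U)| \;\leq\; \sqrt{t_{F_{1}}(U)\, t_{F_{2}}(U)}, \]
where $F_{1}$ and $F_{2}$ are built from $F$ by symmetrising across the apex.

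The main obstacle is the combinatorial bookkeeping: the number of even-edge subgraphs of $W_{2k}$ grows like $2^{4k-1}$, and a Cauchy--Schwarz pairing that absorbs every sign-indefinite contribution must be organised uniformly in $k$. I expect the cyclic symmetry of $W_{2k}$ to reduce the verification to a small template inequality indexed by the rim-segment pattern of $F$, with the stability constant coming out of a final scalar optimisation analogous to the choice $c=1/7 < (3-\sqrt{5})/4$ used at the end of the proof of Theorem~\ref{thm:K222}. If the extracted $c_k$ falls slightly short of $2^{-2k-1}$, the slack $\alpha^{2}$ in the stability calculation leaves room to close the argument by combining with a quantitative commonality estimate for $W_{2k}$.
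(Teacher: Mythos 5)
Your gluing step and the reduction $m_{B_{2k}}(W) \geq m_{W_{2k}}(W)^2/m_{C_{2k}}(W)$ are sound, and the algebraic observation that the problem reduces to a linear stability estimate $m_{W_{2k}}(W) - 2^{1-4k} \geq c_k\bigl(m_{C_{2k}}(W) - 2^{1-2k}\bigr)$ with $c_k \geq 2^{-2k-1}$ is correctly computed. But that stability estimate is exactly the hard part, and your proposal does not prove it: the paragraph starting ``To prove this stability lemma\ldots'' is a sketch of a plan (``should then be absorbed,'' ``I expect the cyclic symmetry\ldots to reduce the verification'') rather than an argument. You would need a $k$-dependent analogue of Lemma~\ref{lem:diamond} comparing an even wheel to an even cycle; the number of sign-indefinite terms in the $U$-expansion grows exponentially with $k$, and there is no evidence given that a uniform Cauchy--Schwarz pairing exists or that the resulting constant really clears the threshold $2^{-2k-1}$. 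So this is a genuine gap, not a routine verification.

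The paper avoids this difficulty entirely by choosing a different decomposition. Instead of gluing two copies of $W_{2k}$ along the rim $C_{2k}$, it introduces the graph $D_k$ (a $k$-edge path with two non-adjacent apex vertices), notes that $B_{2k}$ is two copies of $D_k$ glued along a fixed $4$-cycle, and that $D_k$ is itself a $D$-tree over copies of the diamond $D=K_{1,1,2}$. Chaining Lemma~\ref{lem:tree_hom} and Lemma~\ref{lem:tm} gives
\[
m_{B_{2k}}(W) \;\geq\; \frac{m_{D}(W)^{2k}}{m_{K_{1,2}}(W)^{2k-2}\, m_{C_4}(W)},
\]
which depends on $k$ only through exponents. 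The one stability input needed is the \emph{fixed} Lemma~\ref{lem:diamond} relating $m_D$ and $m_{C_4}$, already proved, plus Goodman's identity to express $m_{K_{1,2}}$ in terms of $m_D$. The whole thing then collapses to a one-variable rational optimisation in $x=\sqrt{m_D(W)}$ over $x\geq 1/4$, verified by an explicit positivity check of a cubic. In short: you correctly identified the shape of the argument, but chose a decomposition that pushes all the difficulty into an unproven family of stability lemmas indexed by $k$, whereas the paper's decomposition reduces to a single, already-established stability inequality for small graphs. If you want to salvage your route, you would need to actually establish the $W_{2k}$-vs-$C_{2k}$ stability bound, which appears substantially harder than the beachball theorem itself.
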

\begin{proof}
The proof is essentially the same as Theorem~\ref{thm:K222} despite a slightly general setting. 
Let $D_k$ be the graph obtained by adding two apex vertices to a $k$-edge path, i.e., it consists of $k$ copies of diamonds glued along $K_{1,2}$'s centred at the vertices of degree three in a path-like way, as described in Figure~\ref{fig:ball}. In particular, $D_1=D$ and $D_2$ is the 4-wheel.
Lemma~\ref{lem:tree_hom} then gives
\begin{align*}
    t_{D_k}(W)\geq \frac{t_D(W)^k}{t_{K_{1,2}}(W)^{k-1}}
\end{align*}
and thus, $m_{D_k}(W)\geq m_D(W)^k/m_{K_{1,2}}(W)^{k-1}$ by Lemma~\ref{lem:tm}.

The $2k$-beachball is then obtained by gluing two copies of $D_k$ along the 4-cycle that contains two vertices of degree three. The standard application of the Cauchy--Schwarz inequality (or Lemma~\ref{lem:tree_hom}) gives $t_{B_{2k}}(W)\geq t_{D_k}(W)^2/t_{C_4}(W)$,
and thus,
    \begin{align*}
        m_{B_{2k}}(W) \geq \frac{m_{D_k}(W)^2}{m_{C_4}(W)} \geq \frac{m_D(W)^{2k}}{m_{K_{1,2}}(W)^{2k-2}m_{C_4}(W)}
    \end{align*}
by Lemma~\ref{lem:tm}. Then again by \eqref{eq:goodmanD} and Lemma~\ref{lem:diamond},
\begin{align*}
    m_{B_{2k}}(W) \geq \frac{c\cdot m_{D}(W)^{2k}}{\left(\frac{2}{3}\sqrt{m_{D}(W)}+\frac{1}{3}\right)^{2k-2} \left(m_D(W)-\frac{1}{16}+\frac{c}{8}\right)}.
\end{align*}
It remains to minimise rational function 
\begin{align*}
    h_{k,c}(x):= \frac{16\cdot 3^{2k-2}cx^{4k}}{(2x+1)^{2k-2}(16x^2-1+2c)}
\end{align*}
of $x:=\sqrt{m_D(W)}$ subject to $x\geq 1/4$. 
Taking $c=1/7$, $h_{k,1/7}$ is a positive constant times the function $g_k$, where the derivative
\begin{align*}
    g_k'(x)= (2x+1)^{1-2k}(112x^2-5)^{-2}x^{4k-1}(112kx^3+(112k-56)x^2-(5k+5)x-5k).
\end{align*}
Thus, it suffices to check $p_k(x)=112kx^3+(112k-56)x^2-(5k+5)x-5k>0$ on $x\geq 1/4$. Rearranging the terms we get $p_k(x) = 112k(x-1/4)^3+(196k-56)(x-1/4)^2+(72k-33)(x-1/4)+(10k-19)/4$, which is positive for $x\geq 1/4$ and $k\geq 2$.
Therefore, $h_{k,1/7}(x)$ is minimised when $x=1/4$, which implies $B_{2k}$ is common. 
\end{proof}

We remark that the constant $c=1/7$ has been judiciously chosen. Indeed, if $c$ is too large, then it gets tougher or even impossible to obtain the inequality in Lemma~\ref{lem:diamond}. Otherwise, if $c$ is too small, then the rational function $h_c(x)$ may attain its local minimum at some $x_0>1/4$ and the optimisation does not work. This does happen if one tries to apply the same argument to prove that $K_{2,2,t}$ is common for $t>2$.

However, flag algebras allow us to prove inequalities that resemble Lemma~\ref{lem:diamond} and can be directly applied to~\eqref{eq:222v122}, which gives tighter bounds than the previous approach does. In particular, the following generalises Lemma~\ref{lem:diamond} to any connected bipartite graph on at most $5$ vertices. 

\def\Bipapexcommscale{0.8}
\begin{figure}
{\hskip 1em
    \begin{tikzpicture}[scale=\Bipapexcommscale]
    \draw
    \foreach \x in {0,1}{
    (60+180*\x:0.9) node[vtx](x\x){}
    }
    (x0)--(x1)
    ;
    \draw (0,-1.6)node{$K_{2}$};
    \end{tikzpicture} 
\hfill
    \begin{tikzpicture}[scale=\Bipapexcommscale]
    \draw
    \foreach \x in {0,1,2}{
    (30+120*\x:1) node[vtx](x\x){}
    }
    (x0)--(x2)--(x1)
    ;
    \draw (0,-1.8)node{$K_{1,2} = P_3$};
    \end{tikzpicture}   
\hfill
    \begin{tikzpicture}[scale=\Bipapexcommscale]
    \draw
    \foreach \x in {0,1,2,3}{
    (45+90*\x:1) node[vtx](x\x){}
    }
    (x2)--(x0) (x2)--(x1)  (x2)--(x3)
    ;
    \draw (0,-1.5)node{$K_{1,3}$};
    \end{tikzpicture}    
\hfill
    \begin{tikzpicture}[scale=\Bipapexcommscale]
    \draw
    \foreach \x in {0,1,2,3}{
    (45+90*\x:1) node[vtx](x\x){}
    }
    (x3)--(x0) (x2)--(x1)  (x2)--(x3)
    ;
    \draw (0,-1.4)node{$P_4$};
    \end{tikzpicture}
\hfill
    \begin{tikzpicture}[scale=\Bipapexcommscale]
    \draw
    \foreach \x in {0,1,2,3}{
    (45+90*\x:1) node[vtx](x\x){}
    }
    (x3)--(x0)--(x1) (x2)--(x1)  (x2)--(x3)
    ;
    \draw (0,-1.5)node{$C_4$};
    \end{tikzpicture}
\hskip 1em}
\vskip 0.5em
{\hskip 1em
    \begin{tikzpicture}[scale=\Bipapexcommscale]
    \draw
    \foreach \x in {0,1,2,3,4}{
    (90+72*\x:1) node[vtx](x\x){}
    }
    (x0)--(x1) (x0)--(x2) (x0)--(x3) (x0)--(x4)
    ;
    \draw (0,-1.5)node{$K_{1,4}$};    
    \end{tikzpicture}
\hfill
    \begin{tikzpicture}[scale=\Bipapexcommscale]
    \draw
    \foreach \x in {0,1,2,3,4}{
    (90+72*\x:1) node[vtx](x\x){}
    }
    (x0)--(x1) (x0)--(x2) (x0)--(x4) (x3)--(x4)
    ;
    \draw (0,-1.5)node{$P_4*_u^v K_2$};    
    \end{tikzpicture}   
\hfill
    \begin{tikzpicture}[scale=\Bipapexcommscale]
    \draw
    \foreach \x in {0,1,2,3,4}{
    (90+72*\x:1) node[vtx](x\x){}
    }
    (x0)--(x1)--(x2)  (x0)--(x4)--(x3)
    ;
    \draw (0,-1.5)node{$P_5$};    
    \end{tikzpicture}   
\hfill
    \begin{tikzpicture}[scale=\Bipapexcommscale]
    \draw
    \foreach \x in {0,1,2,3,4}{
    (90+72*\x:1) node[vtx](x\x){}
    }
    (x3)--(x0)--(x2) (x1)--(x3)--(x4)--(x2)
    ;
    \draw (0,-1.5)node{$K_{2,3}^-$};
    \end{tikzpicture}    
\hfill
    \begin{tikzpicture}[scale=\Bipapexcommscale]
    \draw
    \foreach \x in {0,1,2,3,4}{
    (90+72*\x:1) node[vtx](x\x){}
    }
    (x3)--(x0)--(x2)--(x1)--(x3)--(x4)--(x2)
    ;
    \draw (0,-1.5)node{$K_{2,3}$};    
    \end{tikzpicture}
\hskip 1em}
    \caption{The ten connected bipartite graphs on at most 5 vertices from Lemma~\ref{lem:flag}.}
    \label{fig:my_label}
\end{figure}
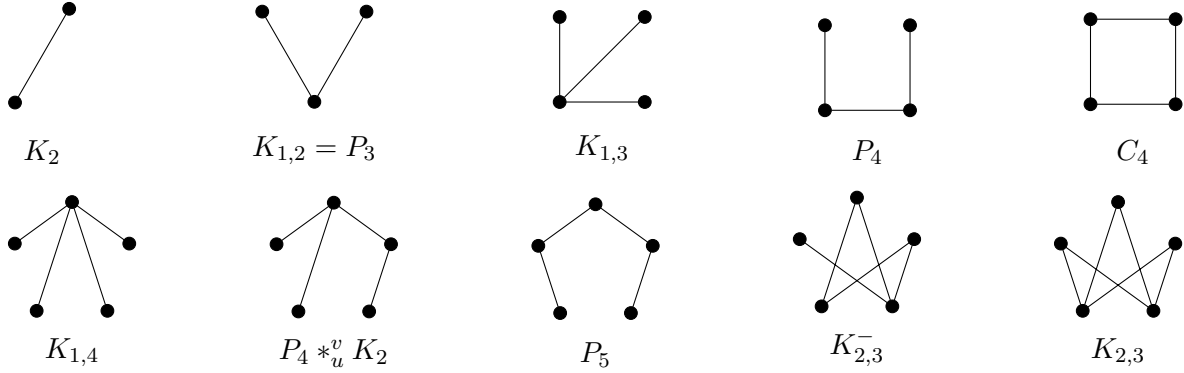

\begin{lemma}\label{lem:flag}
If $H$ is a connected bipartite graph on at most $5$ vertices and $W$ is a graphon, then
\begin{align}
m_{H^{+1}}(W) \geq 2^{-v(H)} \cdot m_{H}(W).
\label{eq:c}
\end{align}
Moreover, if $H\neq K_2$, then $m_{H^{+1}}(W) = 2^{-v(H)} \cdot m_{H}(W)$ if and only if $m_{C_4}(W)=1/8$.
\end{lemma}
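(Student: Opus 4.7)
The approach is to expand both sides of~\eqref{eq:c} using the signed-graphon identity~\eqref{eq:even}. Setting $U := 2W - 1$ and using $e(H^{+1}) = e(H) + v(H)$, both sides carry the common factor $2^{1 - e(H) - v(H)}$, so \eqref{eq:c} reduces to
\[
\sum_{F \in \ev(H^{+1})} t_F(U) \;\ge\; \sum_{F \in \ev(H)} t_F(U).
\]
Because every edge subgraph of $H$ is also an edge subgraph of $H^{+1}$ (use no apex edges), $\ev(H) \subseteq \ev(H^{+1})$, so the desired inequality is equivalent to $\sum_{F \in \ev(H^{+1}) \setminus \ev(H)} t_F(U) \ge 0$, where every remaining summand corresponds to a subgraph using at least one apex edge.

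Every such $F$ is determined by a pair $(F_H, S)$ with $F_H \subseteq E(H)$ the edges of $F$ lying in $H$ and $S \subseteq V(H)$ the apex-neighbourhood in $F$. Integrating out the apex coordinate $y$ gives $t_F(U) = \E_{\vec{x}}\bigl[\prod_{e \in F_H} U_e \cdot \psi_S(\vec{x})\bigr]$, where $\psi_S(\vec{x}) := \E_y \prod_{v \in S} U(x_v,y)$. The target inequality thus becomes a polynomial inequality in signed-graphon densities of graphs on at most $v(H)+1 \le 6$ vertices, which is precisely the regime where Razborov's flag algebra method~\cite{R07} applies.

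I would then, for each of the ten connected bipartite graphs $H$ on at most $5$ vertices (Figure~\ref{fig:my_label}), set up the associated semidefinite program on rooted flags, solve it numerically, and round the numerical optimum to an exact rational sum-of-squares certificate. Two of the smallest cases also admit computer-free proofs: for $H = K_2$, inequality \eqref{eq:c} is the commonality of $K_3$ and follows directly from Goodman's identity (Lemma~\ref{lem:goodman}); for $H = K_{1,2}$, combining the Cauchy--Schwarz bound $m_D(W) \ge m_{K_3}(W)^2$ (Lemma~\ref{lem:tm} with $J = K_3$ and $F = K_2$) with the inequality $m_{K_3}(W)^2 \ge \tfrac{1}{8}m_{K_{1,2}}(W)$ (which, via Goodman's identity, reduces to the quadratic $24\,m_{K_3}(W)^2 - 2\,m_{K_3}(W) - 1 \ge 0$, valid since $m_{K_3}(W) \ge 1/4$) already proves \eqref{eq:c}.

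The main obstacle is computational: producing the SDP certificates and rigorously rounding them to exact rationals, particularly for the larger graphs such as $K_{2,3}$ and $K_{2,3}^-$ whose flag spaces grow quickly. The equality statement for $H \neq K_2$ will then be extracted from the certificates: the only squares that can vanish simultaneously at equality correspond to the $K_{1,2}$- and $C_4$-configurations, forcing $t_{K_{1,2}}(U) = 0$ and $t_{C_4}(U) = 0$, which, together with the bound $|t_{K_3^+}(U)| \le \sqrt{t_{K_{1,2}}(U)\,t_{C_4}(U)}$ established inside the proof of Lemma~\ref{lem:diamond}, is equivalent to $m_{C_4}(W) = 1/8$. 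The case $H = K_2$ must be excluded because there $H^{+1} = K_3$ and Goodman's identity shows that equality in \eqref{eq:c} holds whenever $W$ has constant degree $1/2$, a strictly weaker condition than $m_{C_4}(W) = 1/8$.
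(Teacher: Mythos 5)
Your proposal takes essentially the same route as the paper: reduce the inequality to a flag-algebra sum-of-squares certificate produced by SDP computation, which is exactly what the paper does (and, like the paper, you defer the actual certificates to the computer). The added observations are sound and nice: the signed-graphon reduction to $\sum_{F \in \ev(H^{+1})\setminus\ev(H)} t_F(U) \ge 0$ is correct, the computer-free proofs for $H=K_2$ (Goodman) and $H=K_{1,2}$ (via $m_D\ge m_{K_3}^2$ and $24m_{K_3}^2-2m_{K_3}-1\ge 0$ for $m_{K_3}\ge 1/4$) check out, and your reason for excluding $K_2$ from the ``moreover'' clause is the right one. One small caution: your description of which squares vanish at equality (``the $K_{1,2}$- and $C_4$-configurations'') is a guess about the shape of certificates you haven't computed; the paper instead argues directly by complementary slackness, noting that the certificates explicitly invoke the constraint $m_{C_4}(W)\ge 1/8$ with a positive multiplier, so equality forces $m_{C_4}(W)=1/8$, and the converse is immediate since $m_{C_4}(W)=1/8$ already forces $U=2W-1=0$ a.e. (as $t_{C_4}(U)=\mathrm{tr}(U^4)=0$ kills all eigenvalues of the self-adjoint operator $U$), which makes the $t_{K_3^+}$ bound you cite unnecessary for that equivalence.
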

\begin{proof}
For any of the ten considered graphs, the proof of \eqref{eq:c} is a straightforward flag algebra application. As the proof of \eqref{eq:c} for $H\neq K_2$ uses $m_{C_4}(W) \ge 1/8$, the moreover part follows by complementary slackness.
The flag algebra calculations certifying \eqref{eq:c} can be downloaded from \url{http://lidicky.name/pub/common/}.
\end{proof}

Given a common graph $H$, if the inequality \eqref{eq:c} holds then a direct application of Lemma~\ref{lem:tm} yields that $H^{+a}$ is common for every positive integer $a$. In particular, we are now ready to prove Theorem~\ref{thm:Krst}.

\begin{proof}[Proof of Theorem~\ref{thm:Krst}]
Fix an integer $a\ge2$ and a graph $H$. By convexity (or Lemma~\ref{lem:tree_hom}), we have
\begin{align*}
    t_{H^{+a}}(W) \geq \frac{t_{H^{+1}}(W)^a}{t_{H}(W)^{a-1}}\,.
\end{align*}
Lemma~\ref{lem:tm} then yields that
\begin{align*}
    m_{H^{+a}}(W) \geq \frac{m_{H^{+1}}(W)^a}{m_{H}(W)^{a-1}}\,,
\end{align*}
and thus, by Lemma~\ref{lem:flag}, we conclude that
\begin{align*}
    m_{H^{+a}}(W) \geq \frac{m_{H}(W)}{2^{a \cdot v(H)}}\,.
\end{align*}
Since $H$ is common, i.e.,  $m_{H}(W) \geq 2^{1-e(H)}$, we have that $m_{H^{+a}}(W) \geq 2^{1-e(H)-a\cdot v(H)} = 2^{1-e(H^{+a})}$.
In other words, the graph $H^{+a}$ is common.
\end{proof}

\section{Concluding remarks}

\textbf{Stability.} When a graph $H$ is known to be common, it is natural to ask a stability question, i.e., whether the random colouring is (asymptotically) the unique minimiser of the number of monochromatic copies of $H$. In other words, is $m_H(W)$ uniquely minimised by $W=1/2$ almost everywhere?
For bipartite graphs, this question connects to the so-called \emph{Forcing Conjecture}~\cite{SkokanThoma04,CFS10} stating that if $H$ is bipartite with at least one cycle and $p \in (0,1)$, then $W=p$ almost everywhere uniquely minimises the number of copies of $H$ among all graphons of density $p$.

For our results, one may check that that the random colouring is the unique minimiser of $m_H$ whenever $H$ is a triangle-tree with $\kappa(H)\ge 1$, i.e., a triangle-tree that is not a triangle-vertex-tree. Indeed, as both $W$ and $1-W$ must be tight 
for \eqref{eq:Jtree_hom}, inspecting the proof of~\cite[Theorem~2.7]{L19} yields that any minimiser of $m_H$ must be $1/2$-regular and have the `correct' codegrees, i.e.,
$\int W(x,y) \mathrm{d}y=1/2$ and $\int W(x,z)W(z,y)\mathrm{d}z=1/4$ for almost every $x,y\in[0,1]$, respectively. 
In particular, Lemma~\ref{lem:diamond} and its applications immediately proves that $K_{1,1,2}$, $K_{1,2,2}$, and $K_{2,2,2}$ has a unique minimiser. 
On the other hand, there are infinitely many minimisers of $m_{K_3}$. Indeed, $m_{K_3}(W)=1/4$ for every $1/2$-regular graphon $W$.
Analogusly, any $1/2$-regular graphon minimizes $m_H$ when $H$ is a fixed triangle-vertex-tree. 

In all the cases covered in Theorem~\ref{thm:Krst} except $H=K_2$, the `moreover' part of Lemma~\ref{lem:flag} yields that the random colouring is the unique minimiser. 
When $H=K_2$, the graph $H^{+a}$ is simply the complete tripartite graph $K_{1,1,a}$. Therefore, the case $a=1$ corresponds to $H^{+a}=K_3$, so every $1/2$-regular graphon minimises $m_{K_3}$. On the other hand, if $a \ge 2$, then $H^{+a}$ is a triangle-tree with $\kappa(H^{+a})=a-1$, hence by the discussion in the previous paragraph, $m_{H^{+a}}(W)$ is uniquely minimised when $W=1/2$ almost everywhere.

\vspace{3mm}
\medskip

\noindent
\textbf{Theorem~\ref{thm:tritree} for odd cycles.} It is certainly possible to generalise Theorem~\ref{thm:tritree} by replacing triangles by odd cycles. One way is to define \emph{$C_{2k+1}$-vertex-tree} and \emph{$C_{2k+1}$-edge-tree} by allowing recursive additions of odd cycles along vertices or edges, respectively. 
It is then straightforward to check these graphs are common by using~\ref{lem:tree_hom}. 
Furthermore, one may also generalise Theorem~\ref{thm:tritree} by allowing both types of vertex- and edge-additions of odd cycles of length $2k+1$; 
however, it is unclear that one can allow even more general additive operation between odd cycles, e.g., along a multi-edge path.
It might be interesting to obtain a full generalisation of Theorem~\ref{thm:tritree} along this line to obtain that $C_{2k+1}$-trees are common for every $k$.

\vspace{2mm}
\medskip\noindent
\textbf{Optimal pendant trees.} Let $H$ be a common graph. Then one may ask what is the smallest $T$ that makes $T*_{u}^{v}H$ uncommon. To formalise, let
\begin{align*}
    \mathrm{UC}(H) :=\min\{e(T): T*_{u}^{v}H \text{ is uncommon}\}.
\end{align*}
Note that this parameter might not exist for some bipartite graphs $H$. Indeed, if $H$ satisfies Sidorenko's conjecture, then $T*_{u}^{v}H$ satisfies the conjecture as well. In particular, $H$ is common, and we let $\mathrm{UC}(H)=\infty$.
On the other hand, if $H$ is a triangle-edge-tree, then Lemma~\ref{lem:add_tree} and the proof of Theorem~\ref{thm:adding_tree} yield a lower bound for $\mathrm{UC}(H)$ that is linear in $e(H)$.
Also, Fox's result~\cite{F08} implies that $\mathrm{UC}(K_{t,t,t})=O(t^2)$, which is again linear in terms of the number of edges. It would be interesting to see more precise estimates for $\mathrm{UC}(H)$ for various non-bipartite graphs $H$.

\vspace{2mm}
\medskip\noindent
\textbf{Ramsey multiplicity constant of small graphs.}
The smallest graph whose Ramsey multiplicity constant is not known is $K_4$, and determining the value of $C(K_4)$ is a well-known open problem in extremal combinatorics
with no conjectured value. A direct flag algebra calculation using expressions with 9-vertex subgraph densities yields $C(K_4) \geq 1/33.77 \approx  0.0296$, which is a slight improvement over previously known lower bound
$1/33.9739 \approx 0.0294343$
\cite{Gir79,Sperf12,Niess12,MR624553,MR2675922,Vaughan}. Unfortunately, there is still a non-negligible gap from the best upper bound $1/33.0205 \approx 0.030284$ by Even-Zohar and Linial~\cite{MR3386015}, who improved an upper bound $1/33.0135$ of Thomason~\cite{Thom89,Thom97}.

As noted in the introduction, flag algebra method can be used to prove commonality of many small graphs. In Appendix we give a proof that the graphs $H_3$ and $H_4$ from Wolf's list on Figure~\ref{fig:smallcommon} are common. 
Although it is possible to fully inspect the presented proof by hand, some of the steps were obtained by using computers. It would still be interesting to find simpler `human-friendly' proofs of the commonality of $H_3$ or $H_4$.

\vspace{5mm}

\noindent\textbf{Acknowledgements.} Part of this work was carried out when the first, second, and third authors met in Seoul for One-day Meeting on
 Extremal Combinatorics. We would like to thank the organisers of the workshop for their hospitality. The second author is grateful to David Conlon for helpful discussions and comments.
We are also grateful to anonymous referees and Steve Butler for helping us to improve the presentation of this paper.

\bibliographystyle{abbrvurl}
\bibliography{references}

\pagebreak
\appendix\section{Proof of commonality of $H_3$ and $H_4$}
\setcounter{MaxMatrixCols}{16}
We present proofs of the inequalities $m_{H_3}(W) \ge 2^{-5}$ and $m_{H_4}(W) \ge 2^{-6}$ for all graphons $W$, where $H_3$ and $H_4$ are depicted on Figure~\ref{fig:smallcommon}.
The proofs were obtained with a computer assistance using libraries CSDP~\cite{CSDP} and QSOPT~\cite{QSOPT}.

Firstly, the following three subgraph density expressions will evaluate to a nonnegative number for every graphon $W$ due to the commonality of the corresponding graphs:
\[
           \mbox{(1) } 465\cdot\left(m_{H_1}(W) - 2^{-5}\right) \,,
\quad\quad \mbox{(2) } 465\cdot\left(m_{H_2}(W) - 2^{-5}\right) \, \quad\mbox{ and }
\quad\quad \mbox{(3) } 48\cdot\left(m_{C_5}(W) - 2^{-4}\right) \,.
\]
Moreover, each of these expression will be written as a linear combination of $5$-vertex \emph{induced} subgraph densities; recall that $\tau_H(W)$, the induced density of~$H$ in~$W$, is defined as follows:
$$
     \tau_H(W):=\E
     \left[
    \prod_{ij\in E(H)}W(x_i,x_j)\prod_{ij\notin E(H)} (1-W(x_i,x_j))
    \right].
$$
As we aim to exploit the symmetry of the colours in Ramsey multiplicity, we let $f(\tau_H(W)) := \tau_H(W) + \tau_{\overline{H}}(W)$ for every graph $H$ and extend $f$ linearly to formal linear combinations of graphs.

Let $P_{ab}(W)$ be the probability measure on $[0,1]^2$ which, given a graphon $W$, corresponds to a uniformly sampled pair $(a,b)$ that induces an edge.
Let $T_{\emptyset}(W)$ and $T_{bc}(W)$ be the probability measures on $[0,1]^3$ that correspond to sampling $(a,b,c)$ inducing an independent set and a single-edge graph $\{bc\}$, respectively. We consider the following 13 density expressions represented as sum-of-squares (we note that (6) and (7) were suggested by a computer search): %
\def\f{{\mbox{\large $f$}}}
{\small\begin{align*}
\mbox{\large (4)} \quad & 10 \cdot \f\left(\E_{T_{\emptyset}(W)}\left[\left(
  \PP\left[x \in \textstyle\bigcap_{z\in\{a,b,c\}} N_z \right]
- \PP\left[x \notin \textstyle\bigcup_{z\in\{a,b,c\}} N_z\right]
\right)^2\right] \right) \,,
\\
\mbox{\large (5)} \quad & 10 \cdot  \f\left(\E_{T_{\emptyset}(W)}\left[\left(
  8\cdot\PP\left[x \notin \textstyle\bigcup_{z\in\{a,b,c\}} N_z\right] - 1
\right)^2\right] \right) \,,
\\
\mbox{\large (6)} \quad & 30 \cdot \f\left(\E_{T_{bc}(W)}\left[\Big(
  2\cdot \PP[x \in N_b \Delta N_c]
+ 3\cdot \PP\left[x \in (N_b\Delta N_c) \setminus N_a \right]
\Big)^2\right] \right) \,,
\\
\mbox{\large (7)} \quad & 30 \cdot \f\left(\E_{T_{bc}(W)}\left[ \Big(
  2 \cdot \PP[x \in N_b \Delta N_c]
- 7 \cdot \PP\left[x \in (N_b\Delta N_c) \setminus N_a \right]
\Big)^2\right] \right) \,,
\\
\mbox{\large (8)} \quad & 30 \cdot \f\left(\E_{T_{bc}(W)}\left[ \left(
  \PP\left[x \in \textstyle\bigcap_{z\in\{a,b,c\}} N_z\right]
- \PP\left[x \notin \textstyle\bigcup_{z\in\{a,b,c\}} N_z\right]
\right)^2 \right] \right) \,,
\\
\mbox{\large (9)} \quad & 30 \cdot \f\left(\E_{T_{bc}(W)}\left[ \left(
  \PP\left[(x \in N_a \setminus \textstyle\bigcup_{z\in\{b,c\}} N_z\right]
- \PP\left[x \notin \textstyle\bigcup_{z\in\{a,b,c\}} N_z\right]
\right)^2 \right] \right) \,,
\\
\mbox{\large(10)} \quad & 30 \cdot \f\left(\E_{T_{bc}(W)}\left[ \left(
  \PP\left[x \in \textstyle\bigcap_{z\in\{b,c\}} N_z \setminus N_a\right]
- \PP\left[x \notin \textstyle\bigcup_{z\in\{a,b,c\}} N_z\right]
\right)^2 \right] \right) \,,
\\
\mbox{\large (11)} \quad & 30 \cdot \f\left(\E_{T_{bc}(W)}\left[ \left(
  \PP\left[x \in (N_b\Delta N_c) \setminus N_a\right]
- 2\cdot \PP\left[x \notin \textstyle\bigcup_{z\in\{a,b,c\}} N_z\right]
\right)^2 \right] \right) \,,
\\
\mbox{\large (12)} \quad & 30 \cdot \f\left(\E_{T_{bc}(W)}\left[ \left(
\PP\left[x \in (N_b\Delta N_c) \cap N_a\right]
- 2\cdot \PP\left[x \notin \textstyle\bigcup_{z\in\{a,b,c\}} N_z\right]
\right)^2 \right] \right) \,,
\\
\mbox{\large (13)} \quad & 15 \cdot \E_{a\in[0,1]}\left[ \Big(
\big(2\cdot\PP[x\in N_y]-1\big) \cdot 
\big( \PP\left[x\in N_a \land y \in N_a \right] - \PP\left[x\notin N_a \land y \notin N_a \right] \big)
\Big)^2 \right] \,,
\\
\mbox{\large (14)} \quad & 15 \cdot \f\left(\E_{P_{ab}(W)}\left[ \left(
\PP\left[x \in N_a\right] - \PP\left[x \in N_b\right]
\right)^2 \right] \right) \,,
\\
\mbox{\large (15)} \quad & 15 \cdot \E_{a\in[0,1]}\left[ \left(
2\cdot\PP\left[x\in N_a \land y \in N_a \right] + 2\cdot\PP\left[x\notin N_a \land y \notin N_a \right] - 1
\right)^2 \right],\text{ and }
\\
\mbox{\large (16)} \quad & 30 \cdot \f\left(\E_{P_{ab}(W)}\left[ 
\PP[x \in N_a \cap N_b] \cdot \big(2\cdot \PP\left[y \in N_a \Delta N_b\right] - 1\big)^2 \right] \right) \,,
\end{align*}}%
where $x$ and $y$ are uniformly sampled vertices of $W$,
and $x \in N_{\star}$ abbreviates the event of sampling an edge between $x$ and $\star$.
Clearly, each expression evaluates to a nonnegative number for any $W$ and can be written as a linear combination of $5$-vertex induced subgraph densities.

\begin{figure}\def\FIVEgraphsscale{0.515}
\begin{tikzpicture}[scale=\FIVEgraphsscale]
\draw\foreach \i in {0,1,2,3,4}{(90+72*\i:1) node[vtx](v\i){}};
\end{tikzpicture}
\hfill
\begin{tikzpicture}[scale=\FIVEgraphsscale]
\draw\foreach \i in {0,1,2,3,4}{(90+72*\i:1) node[vtx](v\i){}};
\draw(v3)--(v2);
\end{tikzpicture}
\hfill
\begin{tikzpicture}[scale=\FIVEgraphsscale]
\draw\foreach \i in {0,1,2,3,4}{(90+72*\i:1) node[vtx](v\i){}};
\draw(v2)--(v3)--(v4);
\end{tikzpicture}
\hfill
\begin{tikzpicture}[scale=\FIVEgraphsscale]
\draw\foreach \i in {0,1,2,3,4}{(90+72*\i:1) node[vtx](v\i){}};
\draw(v1)--(v2)(v3)--(v4);
\end{tikzpicture}
\hfill
\begin{tikzpicture}[scale=\FIVEgraphsscale]
\draw\foreach \i in {0,1,2,3,4}{(90+72*\i:1) node[vtx](v\i){}};
\draw(v1)--(v4)(v2)--(v4)(v3)--(v4);
\end{tikzpicture}
\hfill
\begin{tikzpicture}[scale=\FIVEgraphsscale]
\draw\foreach \i in {0,1,2,3,4}{(90+72*\i:1) node[vtx](v\i){}};
\draw(v2)--(v3)(v2)--(v4)(v3)--(v4);
\end{tikzpicture}
\hfill
\begin{tikzpicture}[scale=\FIVEgraphsscale]
\draw\foreach \i in {0,1,2,3,4}{(90+72*\i:1) node[vtx](v\i){}};
\draw(v2)--(v3) (v1)--(v0)--(v4);
\end{tikzpicture}
\hfill
\begin{tikzpicture}[scale=\FIVEgraphsscale]
\draw\foreach \i in {0,1,2,3,4}{(90+72*\i:1) node[vtx](v\i){}};
\draw(v1)--(v2)--(v3)--(v4);
\end{tikzpicture}
\hfill
\begin{tikzpicture}[scale=\FIVEgraphsscale]
\draw\foreach \i in {0,1,2,3,4}{(90+72*\i:1) node[vtx](v\i){}};
\draw(v4)--(v0)--(v1) (v2)--(v0)--(v3);
\end{tikzpicture}
\vskip 1.5em
\begin{tikzpicture}[scale=\FIVEgraphsscale]
\draw\foreach \i in {0,1,2,3,4}{(90+72*\i:1) node[vtx](v\i){}};
\draw(v4)--(v1)--(v0)--(v4)--(v3);
\end{tikzpicture}
\hfill
\begin{tikzpicture}[scale=\FIVEgraphsscale]
\draw\foreach \i in {0,1,2,3,4}{(90+72*\i:1) node[vtx](v\i){}};
\draw(v2)--(v3)(v0)--(v4)(v1)--(v4)(v3)--(v4);
\end{tikzpicture}
\hfill
\begin{tikzpicture}[scale=\FIVEgraphsscale]
\draw\foreach \i in {0,1,2,3,4}{(90+72*\i:1) node[vtx](v\i){}};
\draw(v3)--(v4)--(v0)--(v1)--(v2);
\end{tikzpicture}
\hfill
\begin{tikzpicture}[scale=\FIVEgraphsscale]
\draw\foreach \i in {0,1,2,3,4}{(90+72*\i:1) node[vtx](v\i){}};
\draw(v1)--(v2)--(v3)--(v4)--(v1);
\end{tikzpicture}
\hfill
\begin{tikzpicture}[scale=\FIVEgraphsscale]
\draw\foreach \i in {0,1,2,3,4}{(90+72*\i:1) node[vtx](v\i){}};
\draw(v0)--(v1)--(v4)--(v0) (v2)--(v3);
\end{tikzpicture}
\hfill
\begin{tikzpicture}[scale=\FIVEgraphsscale]
\draw\foreach \i in {0,1,2,3,4}{(90+72*\i:1) node[vtx](v\i){}};
\draw(v2)--(v3)(v0)--(v4)(v1)--(v4)(v2)--(v4)(v3)--(v4);
\end{tikzpicture}
\hfill
\begin{tikzpicture}[scale=\FIVEgraphsscale]
\draw\foreach \i in {0,1,2,3,4}{(90+72*\i:1) node[vtx](v\i){}};
\draw(v4)--(v3)--(v2)--(v1)--(v4)--(v0);
\end{tikzpicture}
\hfill
\begin{tikzpicture}[scale=\FIVEgraphsscale]
\draw\foreach \i in {0,1,2,3,4}{(90+72*\i:1) node[vtx](v\i){}};
\draw(v0)--(v1)--(v4)--(v0) (v1)--(v2) (v4)--(v3);
\end{tikzpicture}
\hfill
\begin{tikzpicture}[scale=\FIVEgraphsscale]
\draw\foreach \i in {0,1,2,3,4}{(90+72*\i:1) node[vtx](v\i){}};
\draw(v0)--(v1)--(v2)--(v3)--(v4)--(v0);
\end{tikzpicture}
\caption{Non-isomorphic partitions of $E(K_5)$ into two parts represented by black and white edges.}\label{fig:all5graphs}
\end{figure}
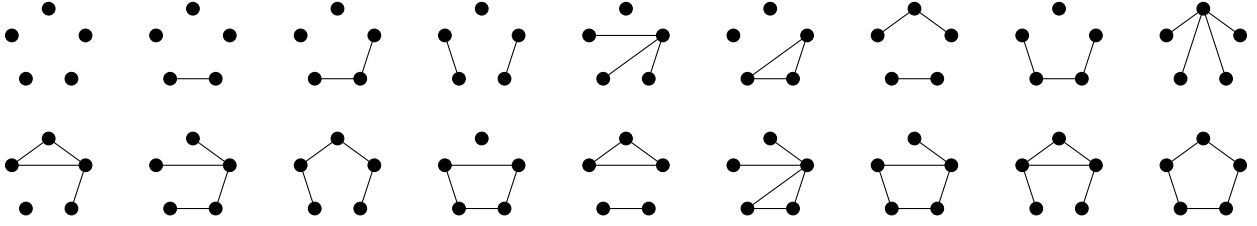

As there are $34$ non-isomorphic $5$-vertex graphs and two of them are self-complementary,
there are exactly $18$ non-isomorphic partitions of $E(K_5)$ into two parts (see Figure~\ref{fig:all5graphs}).
Therefore, we may identify each expression described in the previous paragraph with a vector from $\RR^{18}$ simply by letting its $i$-th coordinate to be the coefficient of the $i$-th graph in Figure~\ref{fig:all5graphs} in the corresponding linear combination. We denote these vectors by $w_1, w_2, \dots, w_{16}$, and let $M := \left(w_1 | w_2 | \cdots | w_{16}\right) $ be the corresponding $18 \times 16$ matrix. Next, let $v_A$ and $v_B$ be the vectors from $\RR^{18}$ representing the expressions $480\cdot\left(m(H_3) - 2^{-5}\right)$ and $960\cdot\left(m(H_4) - 2^{-6}\right)$, respectively. 
Then $v_A$,~$v_B$, and $M$ are
\begin{equation*}{\footnotesize \setlength{\arraycolsep}{3pt}
\renewcommand{\arraystretch}{0.85}
\begin{pmatrix}
465\\177\\33\\81\\-15\\-15\\17\\1\\-15\\-15\\-15\\-7\\-15\\-15\\-15\\-15\\-15\\-15
\end{pmatrix},
\begin{pmatrix}
945\\273\\17\\113\\-15\\-15\\17\\-15\\-15\\-15\\-15\\-15\\-15\\-15\\-15\\-15\\-15\\-15
\end{pmatrix}, \text{ and }
\begin{pmatrix}
 465& 465& 45& 10& 490& 0& 0& 0& 0& 0& 0& 0& 15& 0& 15& 0\\
 177& 177& 21& 1& 7& 0& 0& 3& 3& 3& 12& 12& 3& 0& 3& 3\\
 49& 65& 5& 0& -27& 0& 0& 1& 0& 0& -8& 0& -3& 1& 3& -3\\
 49& 17& 13& 0& 4& 0& 0& 2& -2& 2& 8& 8& 3& -2& -9& 6\\
 9& 9& -3& -1& -28& 75& 12& 0& 0& 0& 3& 0& -3& 3& -9& 6\\
 -15& 33& -3& 1& 4& 0& 0& 0& 0& -6& 0& 0& -3& 3& 15& 6\\
 1& -15& 5& 0& 2& 0& 0& 1& -1& 0& 0& 0& 1& -3& -9& 0\\
 1& 1& 1& 0& 3& -25& -4& 0& 0& 0& -3& -4& -1& 0& 3& -5\\
 -15& -15& -3& -4& -28& 0& 0& 0& 0& 6& 0& 0& 3& 6& -9& 6\\
 -15& -7& -3& 0& 3& 25& 4& -1& 0& -1& 1& 0& 1& 3& 3& -1\\
 -7& -15& -3& 0& 2& 24& 5& 0& 0& 0& 0& -3& -1& 0& -9& 0\\
 -15& -15& 1& 0& 1& -24& -5& 0& 0& 0& 0& 1& 1& -3& 3& -3\\
 -15& -15& -3& 0& 2& -100& -16& 0& 0& 0& 4& 0& -1& 0& 15& 8\\
 -15& -15& -3& 1& 1& 0& 0& 0& 3& 0& 0& 0& -3& -3& 3& 9\\
 -15& -15& -3& 0& 3& 0& 0& -2& 0& 1& 0& 0& 3& 4& 3& -1\\
 -15& -15& -3& 0& 1& -16& 45& 0& 0& 0& 0& 1& -1& -2& 3& 1\\
 -15& -15& -3& 0& 2& 40& -40& 0& 0& 0& 0& 0& 1& 1& -9& 0\\
 -15& -15& 5& 0& 0& -40& -250& 0& 0& 0& 0& 10& 5& -5& 15& 10\\
\end{pmatrix},
}
\end{equation*}
respectively.
Let $M_A$ and $M_B$ be the submatrices of $M$ obtained by deleting the last and the second to last column, respectively.
It follows both $M_A$ and $M_B$ have rank $15$ and the unique $x_A$ and~$x_B$ that satisfy $v_A = M_A x_A$ and $v_B = M_B x_B$ have nonnegative entries, explicitly given as follows:

{\footnotesize\renewcommand{\arraystretch}{0.85}
\begin{align*}
&
x_A = \frac{1}{133168} \times \begin{pmatrix}
 22852\\
 10730\\
 448079\\
 6584\\
 13776\\
 1168352\\
 22852\\
 1351168\\
 9280\\
 513184\\
 43384\\
 7888\\
 172057\\
 329614\\
 45472
\end{pmatrix}
&
x_B = \frac{1}{13601} \times \begin{pmatrix}
 9628\\
 2465\\
 19430\\
 780\\
 2520\\
 56144\\
 9628\\
 133168\\
 19952\\
 19488\\
 14268\\
 6728\\
 71746\\
 14268\\
 18676
\end{pmatrix}
\;.
\end{align*}}%
Thus, $m_{H_3}(W)\ge 2^{-5}$ and $m_{H_4}(W) \ge 2^{-6}$ for every graphon~$W$.
\end{document}